\title{On the existence of zero-sum subsequences \\ of distinct lengths}
\author{Benjamin Girard}
\thanks{\noindent \textit{Mathematics Subject Classification (2010):}
11R27, 11B75, 11P99, 20D60, 20K01, 05E99, 13F05.}
\address{Graduate Center's Department of Mathematics\\ 
The City University of New York\\
365 Fifth Avenue\\
New York City, NY 10016-4309\\
USA.}
\email{bgirard@gc.cuny.edu}
\theoremstyle{plain}
\newtheorem{theorem}{Theorem}[section]
\newtheorem{corollary}[theorem]{Corollary}
\newtheorem{conjecture}{Conjecture}
\theoremstyle{definition}
\theoremstyle{remark}
\def\cnp[#1,#2]{\begin{pmatrix} #1 \\#2 \end{pmatrix}}
\begin{document}
\maketitle \setcounter{page}{1} \vspace{0.0cm}

\begin{abstract} 
In this paper, we obtain a characterization of short normal sequences over a finite Abelian $p$-group, thus answering positively a conjecture of Gao for a variety of such groups. 
Our main result is deduced from a theorem of Alon, Friedland and Kalai, originally proved so as to study the existence of regular subgraphs in almost regular graphs. 
In the special case of elementary $p$-groups, Gao's conjecture is solved using Alon's Combinatorial Nullstellensatz. 
To conclude, we show that, assuming every integer satisfies Property B, this conjecture holds in the case of finite Abelian groups of rank two. 
\end{abstract}

\vspace{-0.7cm}
\section{Introduction}
\label{Intro}
Let $\mathcal{P}$ be the set of prime numbers and let $G$ be a finite Abelian group, written additively. By $\exp(G)$ we denote the exponent of $G$. If $G$ is cyclic of order $n$, it will be denoted by $C_n$. In the general case, we can decompose $G$ as a direct product of cyclic groups $C_{n_1} \oplus \dots \oplus C_{n_r}$ where $1 < n_1 \text{ }
|\text{ } \dots \text{ }|\text{ } n_r \in \mathbb{N}$. For each $g$ in $G$, we denote by $\text{ord}(g)$ its order in $G$, and by $\left\langle g\right\rangle$ the subgroup it generates. 

\medskip
By a \em sequence \em over $G$ of \em length \em $\ell$, we mean a finite sequence of $\ell$ elements from $G$, where repetitions are allowed and the order of elements is disregarded. We use multiplicative notation for sequences. Let
$$S = g_1 \cdot \hdots \cdot g_{\ell} = \prod_{g \in G}g^{\mathsf{v}_g(S)}$$
be a sequence over $G$, where, for all $g \in G$, $\mathsf{v}_g(S) \in \mathbb{N}$ is called the \em multiplicity \em of $g$ in $S$. We call $\text{Supp}(S)=\{g \in G \mid \mathsf{v}_g(S)>0\}$ the \em support \em of $S$, and $\sigma(S)=\sum^{\ell}_{i=1}g_i=\sum_{g \in G}g\mathsf{v}_g(S)$ the \em sum \em of $S$. In addition, we say that $s \in G$ is a \em subsum \em of $S$ when
$$s=\displaystyle\sum_{i \in I} g_i \text{ for some }\emptyset \varsubsetneq I \subseteq \{1,\dots,\ell\}.$$

\noindent If $0$ is not a subsum of $S$, we say that $S$ is a \em zero-sumfree sequence\em. If $\sigma(S)=0,$ then $S$ is said to be a \em zero-sum sequence\em. 
If, moreover, one has $\sigma(T) \neq 0$ for all proper subsequences $T \mid S$, then $S$ is called a \em minimal zero-sum sequence\em.

\medskip
By $\mathsf{D}(G)$ we denote the smallest integer $t \in \mathbb{N}^*$ such that every sequence $S$ over $G$ of length $|S| \geq t$ contains a non-empty zero-sum subsequence. The number $\mathsf{D}(G)$ is called the \em Davenport constant \em of the group $G$. Even though its definition is purely combinatorial, the invariant $\mathsf{D}(G)$ found many applications in number theory (see for instance the book \cite{GeroKoch05} which presents the various aspects of non-unique factorization theory and \cite{GeroRuzsa09} for a recent survey). Thus, many direct and inverse problems related to $\mathsf{D}(G)$ have been studied during last decades, and even if numerous results were proved (see \cite[Chapter $5$]{GeroKoch05} and \cite{GaoGero06} for a survey), its exact value is known for very special types of groups only. 

\medskip
A sequence $S$ over a finite Abelian group $G$ is said to be \em dispersive \em if it contains two non-empty zero-sum subsequences $S_1$ and $S_2$ of distinct length, and \em non-dispersive \em otherwise, that is, when all non-empty zero-sum subsequences of $S$ have same length. One can readily notice, using the very definition of the Davenport constant, that every sequence $S$ over $G$ with $\left|S\right| \geq 2\mathsf{D}(G)$ is dispersive, since $S$ has to contain at least two disjoint non-empty zero-sum subsequences. So, one can ask for the smallest integer $t \in \mathbb{N}^*$ such that every sequence $S$ over $G$ with $|S| \geq t$ is dispersive. The associated inverse problem is then to make explicit the structure of non-dispersive sequences over a finite Abelian group. Concerning this problem, Gao, Hamidoune and Wang recently proved \cite{GaoHami09} that every non-dispersive sequence of $n$ elements in $C_n$ has at most two distinct values, solving a conjecture of Graham reported in a paper of Erd\H{o}s and Szemer\'{e}di \cite{ErdosSzemeredi76}. This result was then generalized by Grynkiewicz in \cite{Grynkie09}.

\medskip
In this article, we study a still widely open conjecture, proposed by Gao in \cite{Gao06}, on the structure of the so-called \em normal sequences \em over a finite Abelian group $G$. A sequence $S$ over $G$ with $\left|S\right| \geq \mathsf{D}(G)$ is said to be \em normal \em if all its zero-sum subsequences $S'$ satisfy $\left|S'\right| \leq \left|S\right|-\mathsf{D}(G) +1$. Gao's conjecture is the following (see also \cite[Conjecture $4.9$]{GaoGero06}).

\begin{conjecture}\label{conjecture Gao} Let $G \simeq C_{n_1} \oplus \dots \oplus C_{n_r},$ with $1 < n_1 \text{ } |\text{ } \dots \text{ }|\text{ } n_r \in \mathbb{N}$, be a finite Abelian group. Let also $S$ be a normal sequence over $G$ of length $\left|S\right|=\mathsf{D}(G)+i-1$, where $i \in \llbracket 1,n_1-1 \rrbracket$. Then $S$ is of the form $S=0^iT$, where $T$ is a zero-sumfree sequence.
\end{conjecture}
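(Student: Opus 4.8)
The plan is to separate the trivial direction from the substantive one, and then to isolate a single self-contained combinatorial statement to which the announced tools apply.

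First, the implication $\Leftarrow$ is immediate: if $S = 0^i T$ with $T$ zero-sumfree, then every nonempty zero-sum subsequence of $S$ consists only of copies of $0$ (any zero-sum subsequence meeting $T$ would induce a nonempty zero-sum subsequence of $T$), hence has length at most $i = |S| - \mathsf{D}(G) + 1$, so $S$ is normal. For the converse I write $m = \mathsf{v}_0(S)$ and $S = 0^m S_0$ with $0 \notin \mathrm{Supp}(S_0)$. Normality forces $m \le i$ (otherwise $0^{i+1}$ is too long a zero-sum) and, more precisely, every nonempty zero-sum subsequence $W_0$ of $S_0$ satisfies $|W_0| \le i - m$ (otherwise $0^m W_0$ is too long). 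Setting $i' = i - m \ge 0$, the sequence $S_0$ is zero-free of length $|S_0| = \mathsf{D}(G) + i' - 1$, and all its zero-sum subsequences have length at most $i'$; that is, $S_0$ is itself a zero-free normal sequence with parameter $i'$. Since $i' = 0$ (equivalently $m = i$) already yields the conclusion, the whole statement reduces to showing that a zero-free normal sequence has $i' = 0$, i.e. is zero-sumfree.

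Second, I rephrase this crux positively. Given a zero-free sequence $T$ with $|T| = \mathsf{D}(G) + k$, I decompose $T = W_1 \cdots W_s T'$ into minimal zero-sum subsequences $W_j$ (each of length $\ge 2$ since $T$ is zero-free) and a zero-sumfree remainder $T'$ with $|T'| \le \mathsf{D}(G) - 1$; then $W_1 \cdots W_s$ is a zero-sum subsequence of length $\ge k + 1$. To contradict normality I need length $\ge k + 2$: exactly one unit beyond what the decomposition delivers. So the heart of the matter is the statement that, for $0 \le k \le n_1 - 2$, every zero-free sequence over $G$ of length $\mathsf{D}(G) + k$ contains a nonempty zero-sum subsequence of length $\ge k + 2$. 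I expect this ``$+1$'' gap to be the main obstacle: the naive decomposition is tight precisely when the zero-sumfree remainder $T'$ attains its maximal length $\mathsf{D}(G) - 1$, and ruling this case out requires genuinely producing a longer zero-sum subsequence rather than merely counting lengths.

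Third, this is where the announced tools enter. For a $p$-group I would feed the $r$ coordinate conditions $\sum_j g_{j,k'}\, x_j^{p-1} = 0$ --- whose common solutions, supported on $\{j : x_j \ne 0\}$, are exactly the zero-sum subsequences --- to the theorem of Alon--Friedland--Kalai, whose regular-subgraph conclusion translates into a zero-sum subsequence of controlled length and so furnishes the missing unit. For the elementary group $C_p^r$ I would instead give a self-contained argument with the Combinatorial Nullstellensatz, adjoining to the coordinate polynomials a factor encoding the length constraint and reading off a solution of prescribed length from the nonvanishing of a suitable coefficient. The decisive point --- and the reason the hypothesis $i \le n_1 - 1$, which keeps the target length below $\exp(G)$, cannot be dropped --- is that in the required range the degree count is too tight for plain Chevalley--Warning, falling short by exactly $(p-1) - k \ge 1$; it is the sharper coefficient-level input of the Nullstellensatz, respectively of Alon--Friedland--Kalai, that bridges this gap. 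For groups that are not $p$-groups I would reduce a rank-two group $C_{n_1} \oplus C_{n_2}$ to its prime building blocks through the structure of long zero-sumfree sequences, a reduction available only conditionally, which accounts for the dependence of that case on Property B.
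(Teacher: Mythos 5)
The statement you were asked about is Conjecture~\ref{conjecture Gao} itself: it is \emph{open}, and the paper only establishes special cases of it (finite Abelian $p$-groups with $i \leq p-1$, Theorem~\ref{Structure des sequences normales d'un p-groupe abelien}; rank two under Property B, Theorem~\ref{Groupes de rang deux}) --- so no complete blind proof exists to be found. Your first two steps are correct but only reformulate the problem: stripping the zeros and the greedy decomposition into minimal zero-sum subsequences are sound, and they show the conjecture is \emph{equivalent} to your ``crux'' (every zero-free sequence of length $\mathsf{D}(G)+k$, $0 \leq k \leq n_1-2$, has a nonempty zero-sum subsequence of length $\geq k+2$). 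Since the equivalence runs both ways, the crux is exactly as hard as the conjecture, and all the substance is deferred to your third step.

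The genuine gap is there. The Alon--Friedland--Kalai theorem and the Nullstellensatz, as they are actually usable here (and as the paper uses them in Theorem~\ref{La Conjecture de Gao est vraie pour les p-groupes abeliens} and Section~\ref{Section p-groupes elementaires}), control the length of the zero-sum subsequence only \emph{modulo} $p$: one adjoins an extra coordinate of all $1$'s with target set $\bar{\mathcal{A}}$, resp.\ the factor $\prod_{j \in \mathcal{A}}\bigl(\sum_i x^{p-1}_i - j\bigr)$, and obtains $\lvert S'\rvert \not\equiv b \pmod p$ for at most $i-1$ prescribed residues $b$. This cannot ``furnish the missing unit'' $\lvert S'\rvert \geq k+2$ directly: residue avoidance is converted into an honest length bound only by invoking normality ($\lvert S'\rvert \not\equiv i$ together with $\lvert S'\rvert \leq i$ forces $\lvert S'\rvert \leq i-1$) and then an induction that regenerates the zeros --- which is precisely the paper's proof of Theorem~\ref{Structure des sequences normales d'un p-groupe abelien}, and which caps at $i \leq p-1$ because only $p-1$ nonzero residues mod $p$ exist. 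In particular the method does not reach $i \leq n_1 - 1 = p^{d_1}-1$ when $d_1 \geq 2$, which is why the conjecture remains open even for $p$-groups; your plan, which claims the full range, would fail at exactly this point. Two smaller inaccuracies: the degree shortfall in the polynomial argument is $p-1$ (the main term has degree $(m-1)(p-1)$ against the target $m(p-1)$), not $(p-1)-k$; and the conditional rank-two case is not a ``reduction to prime building blocks'' --- the paper combines the Gao--Zhuang structure theorem ($S = 0^kTU$ with $T$ zero-sumfree of length $\mathsf{D}(G)-1$ and $\mathrm{Supp}(U) \subseteq \mathrm{Supp}(T)$) with Schmid's inverse characterization of maximal minimal zero-sum sequences to show every nonempty zero-sum subsequence of $TU$ has length $\geq m$, contradicting normality when $i \leq m-1$.
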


This notion of a normal sequence, first introduced in \cite{Gao06}, happens to be crucial in the characterization of sequences $S$ over $G$ with $\left|S\right|=\mathsf{D}(G)+\left|G\right|-2$ and which do not contain any zero-sum subsequence $S'$ satisfying $\left|S'\right|=\left|G\right|$ (see \cite[Theorem $1.7$]{Gao06}). The following two theorems, due to Gao (see \cite[Theorems $1.5$ and $1.6$]{Gao06}), are the only results known concerning the structure of normal sequences over a finite Abelian group. Before stating these two results on Conjecture \ref{conjecture Gao}, we recall that an integer $n \geq 2$ is said to satisfy \em Property B \em if every minimal zero-sum sequence over $C^2_n$ with $|S|=2n-1$ contains some element repeated $n-1$ times (see \cite[Section $5.8$]{GeroKoch05}, and \cite{GaoGeroGrynkie08,GaoGeroSchmid07,Wolfgang} for recent progress). It is conjectured that every $n \geq 2$ satisfies Property B.

\begin{theorem}\label{Theoreme Gao 1} 
Conjecture \ref{conjecture Gao} holds whenever:
\begin{itemize}
\item[$(i)$] $G$ is a finite cyclic group.

\item[$(ii)$] $G \simeq C^2_n$, where $n$ satisfies Property B.

\item[$(iii)$] $G \simeq C^r_p$, where $p \in \left\{2,3,5,7\right\}$.
\end{itemize}
\end{theorem}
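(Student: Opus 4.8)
The three cases share a group-independent reduction, which I would isolate first. Write $m=\mathsf{v}_0(S)$ for the multiplicity of $0$ in the normal sequence $S$, and let $T=S\cdot 0^{-m}$ be the subsequence obtained by deleting every occurrence of $0$, so that $0\notin\text{Supp}(T)$. Since $0^m$ is itself a zero-sum subsequence, normality forces $m\le |S|-\mathsf{D}(G)+1=i$. Moreover every zero-sum subsequence of $S$ has the shape $0^aU$ with $U$ a zero-sum subsequence of $T$ and $0\le a\le m$; applying normality to $0^mU$ shows that every non-empty zero-sum subsequence $U$ of $T$ satisfies $|U|\le i-m$. Hence, if one can prove $m=i$, then $T$ has no non-empty zero-sum subsequence, i.e. $T$ is zero-sumfree and $S=0^iT$, which is exactly the desired conclusion. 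So the entire problem reduces to proving $m\ge i$.

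I would then phrase the remaining content as a single uniform claim $(\star)$: \emph{if $U$ is a sequence over $G$ with $0\notin\text{Supp}(U)$, $|U|=\mathsf{D}(G)+j-1$ and $1\le j\le n_1-1$, then $U$ admits a zero-sum subsequence of length at least $j+1$.} Indeed, were $m<i$, the sequence $T$ would contradict $(\star)$ with $j:=i-m\in\llbracket 1,n_1-1\rrbracket$, since $T$ contains no $0$, has length $\mathsf{D}(G)+j-1$, and all its zero-sum subsequences have length $\le i-m=j$. Thus proving $(\star)$ for each of the three families settles the theorem.

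To prove $(\star)$ I would run an extremal argument. As $|U|\ge\mathsf{D}(G)$, choose a zero-sum subsequence $W\mid U$ of maximal length $L$. The complement $R=UW^{-1}$ must be zero-sumfree, for otherwise a non-empty zero-sum subsequence of $R$ could be appended to $W$, contradicting maximality; hence $|R|\le\mathsf{D}(G)-1$ and $L=|U|-|R|\ge j$. If $L\ge j+1$ we are done, so the whole difficulty is concentrated in the boundary case $L=j$, where $R$ is a zero-sumfree sequence of the \emph{maximal} length $\mathsf{D}(G)-1$. For $G=C_n$ the inverse theorem for the Davenport constant gives $R=g^{n-1}$ with $\text{ord}(g)=n$; since $\sigma(W)=0\ne jg$ (this is where $1\le j\le n-1$ enters), some element $w\mid W$ differs from $g$, say $w=bg$ with $2\le b\le n-1$, and then $(Ww^{-1})\,g^{b}$ is a zero-sum subsequence of length $(j-1)+b\ge j+1$, contradicting $L=j$. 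This disposes of case $(i)$.

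For $(ii)$ and $(iii)$ the mechanism is identical, and the main obstacle is exactly the description of the extremal zero-sumfree sequences $R$ of length $\mathsf{D}(G)-1$, that is, the inverse Davenport problem for $G$. This is precisely where the hypotheses are used: over $C_n^2$, Property B is the input that pins down the structure of the length-$(2n-2)$ zero-sumfree sequences (in a suitable basis, $R=e_1^{\,n-1}\prod_k(b_ke_1+e_2)$), while over $C_p^r$ with $p\in\{2,3,5,7\}$ the corresponding classification of length-$r(p-1)$ zero-sumfree sequences is available for these small primes. In each case the plan is to feed the resulting normal form into the same construction used for $C_n$ — remove one carefully chosen element of $W$ and compensate with an appropriate combination drawn from $R$ — so as to build a zero-sum subsequence of length $\ge j+1$ and contradict $L=j$. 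The hard part is therefore not the reduction but securing the structural classification of maximal zero-sumfree sequences, which is exactly what confines the theorem to these three families.
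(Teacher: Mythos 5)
You should first note that the paper never proves Theorem \ref{Theoreme Gao 1}: it is quoted from Gao and Zhuang \cite{Gao06}, so your attempt can only be judged on its own terms and against the techniques the paper actually develops. Judged so, your reduction is correct (normality gives $\mathsf{v}_0(S)\le i$, every non-empty zero-sum subsequence $U$ of the $0$-free part satisfies $|U|\le i-\mathsf{v}_0(S)$, and the theorem becomes your claim $(\star)$), and your case $(i)$ is complete: in the extremal case $L=j$ the inverse theorem for the Davenport constant of $C_n$ forces $R=g^{n-1}$ with $\mathrm{ord}(g)=n$, some $w\in W$ equals $bg$ with $2\le b\le n-1$ (here $0\notin\mathrm{Supp}(U)$ and $jg\neq 0$ are both used correctly), and the exchange $(Ww^{-1})g^{b}$ is a zero-sum subsequence of $U$ of length $j-1+b\ge j+1$, contradicting maximality.

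For $(ii)$ and $(iii)$, however, what you give is a program rather than a proof, and for $(iii)$ the program breaks at exactly the step you outsource: a classification of zero-sumfree sequences of length $\mathsf{D}(C_p^r)-1=r(p-1)$ is \emph{not} available for general rank $r$, even for $p\in\{3,5,7\}$ — the inverse Davenport problem for elementary $p$-groups of rank $\ge 3$ is open (only $p=2$, where such sequences are bases, is elementary). The true reason these four primes appear is different: for $p\le 7$ one has $p-1\le 6$, so case $(iii)$ is precisely the range covered by Theorem \ref{Theoreme Gao 2}, whose proof does not pass through any inverse classification; and the present paper's Theorem \ref{La Conjecture de Gao est vraie pour les p-groupes abeliens} removes the restriction on $p$ altogether via the Alon--Friedland--Kalai theorem, again with no structural input — evidence that making the inverse classification a prerequisite is the wrong bottleneck. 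For $(ii)$ the structural input does exist under Property B (Theorem \ref{Wolfgang}), but your exchange step is genuinely nontrivial there: $R$ can be any of several shapes of the form $g_1^{sn-1}\prod_i(-x_ig_1+g_2)$ with one element deleted, $W$ may meet both levels, and one must produce $w\in W$ together with $R'\mid R$ of length at least $2$ and sum $w$; none of this case analysis is supplied. Note that the paper's own rank-two argument (Section \ref{Section groupes de rang deux}) avoids the maximal-$W$ device entirely: it first invokes the Gao--Zhuang structure theorem for normal sequences (Theorem \ref{Gao}) to place the whole $0$-free part of $S$ inside a single configuration, and then shows directly (Theorem \ref{Longues sous-suites de somme zero}) that every non-empty zero-sum subsequence of that configuration has length at least $m$. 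Adapting that route, rather than the extremal exchange, is the natural way to complete your case $(ii)$.
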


\begin{theorem}\label{Theoreme Gao 2}
Let $G$ be a finite Abelian group, and let $p$ be the smallest prime divisor of $\exp(G)$. Then Conjecture \ref{conjecture Gao} holds for every integer $i \leq \min\left(6,p-1\right)$. 
\end{theorem}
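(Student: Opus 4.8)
The plan is to reduce the whole statement to the single arithmetic assertion $\mathsf{v}_0(S)=i$. I would first note the easy inequality $\mathsf{v}_0(S)\le i$: if $\mathsf{v}_0(S)\ge i+1$ then $0^{i+1}$ is a zero-sum subsequence of length $i+1>|S|-\mathsf{D}(G)+1$, contradicting normality. Conversely, once $\mathsf{v}_0(S)=i$ is known, setting $T=S\cdot 0^{-i}$ gives $|T|=\mathsf{D}(G)-1$, and any nonempty zero-sum $T'\mid T$ would make $0^iT'$ a zero-sum of length $>i$, which is again impossible; hence $T$ is zero-sumfree and $S=0^iT$ has the desired form. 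So everything comes down to proving $\mathsf{v}_0(S)\ge i$.

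To prove this I would argue by contradiction, assuming $\mathsf{v}_0(S)=i-t$ with $1\le t\le i\le\min(6,p-1)$, and putting $T=S\cdot 0^{-(i-t)}$. Then $T$ is zero-free, $|T|=\mathsf{D}(G)+t-1$, and by normality every nonempty zero-sum subsequence of $T$ has length at most $t$ (appending $0^{i-t}$ to it must keep the length $\le i$). Two features of the hypothesis drive the argument. First, $t\le p-1$ forbids any ``repetition'' zero-sum: every nonzero $g$ has $\mathrm{ord}(g)\ge p>t$, so $g^j$ is never a zero-sum for $1\le j\le t$, and in particular $T$ has no element of order $\le t$. Second, $t\le 6$ keeps the resulting case distinction finite.

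The key step would be a maximal-matching decomposition. I would choose a family $B_1,\dots,B_m$ of pairwise disjoint nonempty zero-sum subsequences of $T$, maximal with respect to adjoining a further disjoint one, and set $B=B_1\cdots B_m$. As a union of zero-sums $B$ is itself a zero-sum, so $|B|\le t$; by maximality every nonempty zero-sum of $T$ meets $B$, so the complement $R=T\cdot B^{-1}$ is zero-sumfree, whence $|R|\le\mathsf{D}(G)-1$ and therefore $|B|\ge t$. Thus $|B|=t$ and $R$ is a zero-sumfree sequence of the maximal possible length $\mathsf{D}(G)-1$. Moreover, for each value $b$ occurring in $B$ the sequence $R\cdot b$ has length $\mathsf{D}(G)$, hence a nonempty zero-sum, which must involve $b$ since $R$ is zero-sumfree; this yields a subsequence $R_b\mid R$ with $\sigma(R_b)=-b$ and $1\le|R_b|\le t-1$.

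Finally I would extract the contradiction by recombining the blocks $B_j$ (short zero-sums whose lengths sum to exactly $t$) with the subsums $R_b$ and with one another. The cases $t=1,2$ serve as warm-ups: for $t=1$ the length-$\mathsf{D}(G)$ sequence $T$ already has a zero-sum of length $\le 1$, impossible; for $t=2$, writing $B=\{g,-g\}$, the subsums $R_g,R_{-g}$ are forced to be the single elements $-g,g$, so $R$ contains both $g$ and $-g$ and is not zero-sumfree. The general mechanism is the same: these pieces can be glued into a zero-sum subsequence of length exceeding $t$ unless the minimal zero-sums of $T$ sit in a very rigid intersecting configuration, which the absence of small-order torsion ($t\le p-1$) forbids. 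I expect the main obstacle to lie exactly here, for $t\in\{3,4,5,6\}$: one must enumerate the admissible intersection patterns of the minimal zero-sums of length $\le t$ (any two disjoint ones would already sum to length $>t$) and show that each either exhibits a long zero-sum outright or is incompatible with $R$ attaining the maximal zero-sumfree length $\mathsf{D}(G)-1$. It is this finite but delicate analysis, rather than any single clean identity, that pins the bound to $\min(6,p-1)$.
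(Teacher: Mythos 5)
First, a point of reference: the paper does not prove this statement at all. Theorem \ref{Theoreme Gao 2} is quoted as a known result of W.~Gao (Theorems 1.5 and 1.6 in \cite{Gao06}), and the paper's own machinery (the Alon--Friedland--Kalai theorem, via Theorem \ref{La Conjecture de Gao est vraie pour les p-groupes abeliens}) yields a stronger conclusion only for $p$-groups, not for the general groups covered here. So your attempt has to stand on its own, and its preparatory part does: the reduction to $\mathsf{v}_0(S)=i$ is correct; so is the contradiction setup, namely that $T=S\cdot 0^{-(i-t)}$ is zero-free of length $\mathsf{D}(G)+t-1$ with every nonempty zero-sum subsequence of length at most $t$, that a maximal disjoint family gives $T=BR$ with $\left|B\right|=t$ and $R$ zero-sumfree of the maximal length $\mathsf{D}(G)-1$, that for each entry $b$ of $B$ there is $R_b\mid R$ with $\sigma(R_b)=-b$ and $1\leq\left|R_b\right|\leq t-1$, and that $\mathrm{ord}(g)\geq p>t$ for every $g$ in $T$. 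The cases $t=1$ and $t=2$ are correctly disposed of.

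But the proof stops exactly where the theorem begins. For $t\in\{3,4,5,6\}$ you offer only the expectation that the pieces ``can be glued'' unless the short zero-sums form a rigid intersecting configuration, with no argument; this finite case analysis is the entire content of the theorem (the bound $6$ in Gao's statement exists precisely because it is long and delicate), and the gluing you gesture at does not go through as described. The subsequences $R_b$ for distinct entries $b$ of $B$ need not be pairwise disjoint: if $R$ carries a needed element with multiplicity one, two different $R_b$'s are forced to share it, so one cannot form $b\cdot R_b\cdot b'\cdot R_{b'}$ as a subsequence of $T$, and the length count $2+\left|R_b\right|+\left|R_{b'}\right|>t$ is unavailable; handling exactly these overlaps is the hard part. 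Moreover, your parenthetical constraint that ``any two disjoint ones would already sum to length $>t$'' is false for $t\geq 4$: two disjoint zero-sums of length $2$ have union of length $4\leq t$ when $t\in\{4,5,6\}$, so even the description of the admissible configurations to be enumerated is incorrect. As it stands, the proposal establishes the theorem only for $i\leq 2$ (the contradiction must be run for every $t\leq i$, and $t=3$ is already unhandled); for $i\geq 3$ there is a genuine gap, and closing it would essentially amount to reproducing the Gao--Zhuang analysis that the paper cites rather than reproves.
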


\section{New results and plan of the paper}
\label{Section New Results}
In this paper, we use the notion of a dispersive sequence to obtain a characterization of short normal sequences over a finite Abelian $p$-group, thus improving both Gao's results on this problem. The main theorem (Theorem \ref{La Conjecture de Gao est vraie pour les p-groupes abeliens}) is proved in Section \ref{Section Main Theorem} yet, before giving this general result, we would like to emphasize its consequences.

\begin{theorem}
\label{Structure des sequences normales d'un p-groupe abelien}
Let $G$ be a finite Abelian $p$-group, and let $S$ be a normal sequence over $G$ with $\left|S\right|=\mathsf{D}(G)+i-1$, where $i \in \llbracket 1,p-1 \rrbracket$. Then $S$ is of the form $S=0^iT$, where $T$ is a zero-sumfree sequence. 
\end{theorem}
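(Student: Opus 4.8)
The plan is to reduce the statement to the single assertion $\mathsf{v}_0(S)=i$, and then to derive a contradiction from normality by producing a zero-sum subsequence that is too long. Write $N=\card{S}=\mathsf{D}(G)+i-1$, so that normality means precisely that every zero-sum subsequence $S'$ of $S$ satisfies $\card{S'}\leq N-\mathsf{D}(G)+1=i$. Two elementary consequences get the reduction going. On the one hand $0^{\mathsf{v}_0(S)}$ is a zero-sum subsequence, whence $\mathsf{v}_0(S)\leq i$. On the other hand, if we knew $\mathsf{v}_0(S)=i$ and wrote $S=0^iT$ with $\mathsf{v}_0(T)=0$, then any nonempty zero-sum subsequence $T'\mid T$ would produce the zero-sum subsequence $0^iT'$ of length $i+\card{T'}>i$, against normality; hence $T$ would be zero-sumfree and we would be done. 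So it suffices to prove $\mathsf{v}_0(S)=i$. For structure, consider a longest zero-sum subsequence $Z\mid S$: were $\card{SZ^{-1}}\geq\mathsf{D}(G)$, the complement would contain a further nonempty zero-sum subsequence to adjoin to $Z$, contradicting maximality; thus $\card{SZ^{-1}}\leq\mathsf{D}(G)-1$, i.e.\ $\card{Z}\geq i$, and with normality $\card{Z}=i$ while $W:=SZ^{-1}$ is zero-sumfree of maximal length $\mathsf{D}(G)-1$. The whole problem is now to show that this length-$i$ zero-sum subsequence $Z$ must be $0^i$.

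Suppose then, for contradiction, that $\mathsf{v}_0(S)<i$, so that $Z$ carries a non-zero element; the goal becomes the construction of a zero-sum subsequence of $S$ of length strictly greater than $i$. The device I would use is to control the \emph{length modulo $p$}: since $i+1\leq p$, every nonempty subsequence $I\subseteq\{1,\dots,N\}$ with $\sigma\big(\prod_{j\in I}g_j\big)=0$ and $\card{I}\equiv i+1\pmod p$ automatically has $\card{I}>i$, which is exactly the contradiction sought. To look for such an $I$, pass to the $p$-group $\widetilde{G}=G\oplus C_p$ and substitute $g\mapsto(g,1)$ (adjoining at most one correction term to fix the residue class); the subsequences $I$ above are then precisely the zero-sum subsequences of the resulting sequence over $\widetilde{G}$. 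By Olson's formula $\mathsf{D}(\widetilde{G})=\mathsf{D}(G)+p-1$, so this sequence has length just below the Davenport threshold of $\widetilde{G}$. Consequently its length alone does not force the zero-sum subsequence we want; what must genuinely be shown is that the \emph{number} of zero-sum subsequences in the prescribed length class is non-zero modulo $p$, and that this count vanishes exactly in the degenerate configuration $S=0^iT$.

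This mod-$p$ counting statement is where the real difficulty sits and where I would bring in the external input; everything preceding it is formal. In the elementary case $G\simeq C^r_p$ one writes an explicit polynomial over $\mathbb{F}_p$ whose pertinent coefficient evaluates this count, and shows it non-zero by Alon's Combinatorial Nullstellensatz; for a general finite Abelian $p$-group the corresponding divisibility is supplied by the theorem of Alon, Friedland and Kalai. Pinning down this count --- and verifying that its vanishing forces $Z=0^i$ --- is the main obstacle. I would finish by remarking that in the range $i\leq\min(6,p-1)$ the conclusion is already Theorem \ref{Theoreme Gao 2}, so the substance of the argument lies with large primes and $6<i\leq p-1$, precisely where the Alon--Friedland--Kalai input cannot be dispensed with.
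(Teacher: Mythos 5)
Your opening reduction is correct and cleanly done: normality gives $\mathsf{v}_0(S)\leq i$; conversely $\mathsf{v}_0(S)=i$ forces $T=S0^{-i}$ to be zero-sumfree; and a longest zero-sum subsequence $Z$ has $\left|Z\right|=i$ with zero-sumfree complement of length $\mathsf{D}(G)-1$. But from there the proposal is a plan, not a proof: the entire content of the theorem is concentrated in the step you yourself label ``the main obstacle,'' and that step is never carried out. Moreover, the route you sketch cannot be completed as described. Prescribing the single residue class $i+1 \pmod p$ amounts to applying Theorem \ref{Theoreme Alon} over $G\oplus C_p$ with a singleton residue set in the last coordinate; since that theorem requires $0\in\mathcal{S}_j$, you must shift by an auxiliary ``correction'' element, whose membership in the subset $I$ the theorem cannot force, and in any case the resulting threshold is $\sum_j\left(p^{d_j}-1\right)+(p-1)+1=\mathsf{D}(G)+p-1$, strictly more than the $\mathsf{D}(G)+i-1$ (or $\mathsf{D}(G)+i$, with corrector) you have whenever $i<p-1$. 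The fallback you then invoke --- that the count of zero-sum subsequences of length $\equiv i+1 \pmod p$ is non-zero modulo $p$ unless $S=0^iT$ --- is false as stated: over $C_p$ with $p$ odd and $i=1$, the sequence $S=g^p$ with $g\neq 0$ is not of the form $0T$, yet its only non-empty zero-sum subsequence has length $p\equiv 0\not\equiv 2 \pmod p$. Restricting the dichotomy to normal sequences would repair the counterexample, but then the claim is essentially the theorem itself, and neither the Alon--Friedland--Kalai theorem nor the Nullstellensatz delivers it directly.

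The paper's proof circumvents exactly this obstruction, and the difference is instructive. Instead of demanding a zero-sum subsequence whose length lies in one prescribed class, it demands only that the length \emph{avoid} an arbitrary $(i-1)$-subset $\mathcal{A}\subseteq\llbracket 1,p-1\rrbracket$: applying Theorem \ref{Theoreme Alon} with $\mathcal{S}_n=\llbracket 0,p-1\rrbracket\setminus\mathcal{A}$ (which does contain $0$, so no corrector is needed) makes the term $p^{d_n}-\text{card}_p(\mathcal{S}_n)$ equal to $i-1$, so the threshold drops to exactly $\mathsf{D}(G)+i-1$ --- this calibration is precisely what your single-residue demand destroys. This yields Theorem \ref{La Conjecture de Gao est vraie pour les p-groupes abeliens}, and the structural conclusion then follows by induction on $i$ rather than by any counting dichotomy: choosing $\mathcal{A}\ni i$ produces a non-empty zero-sum $S'\mid S$ with $\left|S'\right|\not\equiv i \pmod p$, hence $\left|S'\right|\leq i-1$ by normality; the complement $T=SS'^{-1}$ is normal with parameter $k=i-\left|S'\right|$, the induction hypothesis gives $S=0^kU$, and a second application to $U$ (normal with parameter $i-k$) gives $S=0^iV$ with $V$ zero-sumfree. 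If you want to salvage your outline, replace ``find a zero-sum subsequence of length $\equiv i+1$'' by ``find one of length $\not\equiv i$'' and iterate; as it stands, the proposal has a genuine gap at its central step and the sketched repair is unsound.
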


Firstly, Theorem \ref{Structure des sequences normales d'un p-groupe abelien} improves Theorem \ref{Theoreme Gao 2}, by showing that the assumption $i \leq 6$ is unnecessary for finite Abelian $p$-groups. Secondly, it improves Statement $(iii)$ of Theorem \ref{Theoreme Gao 1}, by settling Conjecture \ref{conjecture Gao} for every elementary $p$-group. More generally, we can derive immediately from Theorem \ref{Structure des sequences normales d'un p-groupe abelien} the following corollary on Gao's conjecture. 

\begin{corollary}
\label{Structure des sequences normales d'un p-groupe abelien et conjecture de Gao}
Conjecture \ref{conjecture Gao} holds for all groups of the form $G \simeq C_p \oplus H$, where $H$ is any finite Abelian $p$-group.
\end{corollary}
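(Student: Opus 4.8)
The plan is to observe that this corollary is an immediate specialization of Theorem \ref{Structure des sequences normales d'un p-groupe abelien}, the only real task being to pin down the smallest invariant factor $n_1$ of $G$. First I would note that since $H$ is a finite Abelian $p$-group and $C_p$ is a $p$-group, the group $G \simeq C_p \oplus H$ is itself a finite Abelian $p$-group, so Theorem \ref{Structure des sequences normales d'un p-groupe abelien} applies verbatim to $G$.

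Next I would identify $n_1$ in the invariant factor decomposition $G \simeq C_{n_1} \oplus \dots \oplus C_{n_r}$ with $1 < n_1 \mid \dots \mid n_r$. Writing $H$ itself in invariant factor form $H \simeq C_{p^{b_1}} \oplus \dots \oplus C_{p^{b_s}}$ with $1 \leq b_1 \leq \dots \leq b_s$, we get $G \simeq C_p \oplus C_{p^{b_1}} \oplus \dots \oplus C_{p^{b_s}}$. Because $G$ is a $p$-group, every invariant factor is a power of $p$, hence at least $p$; since $C_p$ occurs as a direct summand, the minimal such factor is exactly $n_1 = p$. Consequently the index range $i \in \llbracket 1,n_1-1\rrbracket$ featuring in Conjecture \ref{conjecture Gao} coincides precisely with the range $i \in \llbracket 1,p-1\rrbracket$ handled by Theorem \ref{Structure des sequences normales d'un p-groupe abelien}.

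Finally I would conclude directly: let $S$ be a normal sequence over $G$ of length $\left|S\right| = \mathsf{D}(G) + i - 1$ with $i \in \llbracket 1,n_1-1\rrbracket = \llbracket 1,p-1\rrbracket$. Theorem \ref{Structure des sequences normales d'un p-groupe abelien} then yields that $S = 0^i T$ with $T$ zero-sumfree, which is exactly the assertion of Conjecture \ref{conjecture Gao} for $G$. There is no genuine obstacle in this argument; the only point requiring a moment's care is the verification that $n_1 = p$, and once that bookkeeping on the invariant factors is settled the corollary follows with no further work.
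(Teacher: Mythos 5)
Your proposal is correct and matches the paper's own (implicit) derivation exactly: the paper also obtains the corollary as an immediate specialization of Theorem \ref{Structure des sequences normales d'un p-groupe abelien}, the only observation needed being that the presence of the $C_p$ summand forces the smallest invariant factor to be $n_1 = p$, so that the range $i \in \llbracket 1, n_1-1 \rrbracket$ in Conjecture \ref{conjecture Gao} coincides with the range $i \in \llbracket 1, p-1 \rrbracket$ covered by the theorem. Your bookkeeping on the invariant factors is exactly the right (and only) point of care; the argument is complete as written.
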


The main result of this paper is deduced from a theorem of Alon, Friedland and Kalai (see \cite[Theorem A.$1$]{Alon84}), originally proved so as to study the existence of regular subgraphs in almost regular graphs. Our result is the following.

\begin{theorem}
\label{La Conjecture de Gao est vraie pour les p-groupes abeliens}
Let $G$ be a finite Abelian $p$-group, and let $S$ be a sequence over $G$ with $\left|S\right|=\mathsf{D}(G)+i-1$, where $i \in \llbracket 1,p \rrbracket$. Let also $\mathcal{A}$ be any $(i-1)$-subset of $\llbracket 1,p-1 \rrbracket$. Then $S$ contains a non-empty zero-sum subsequence $S'$ such that
$$\left|S'\right| \not\equiv b \text{ } (\text{\em mod \em} p), \text{ for all } b  \in \mathcal{A}.$$
\end{theorem}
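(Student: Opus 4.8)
The plan is to argue by contradiction and to reduce the statement to the prime-power Chevalley--Warning inequality of Alon, Friedland and Kalai (Theorem A.$1$ in \cite{Alon84}), applied over the Boolean cube $\{0,1\}^{\ell}$ against a carefully chosen polynomial weight. First I would coordinatize the group: writing $G \simeq C_{p^{a_1}} \oplus \dots \oplus C_{p^{a_r}}$ and $S = g_1 \cdot \hdots \cdot g_{\ell}$ with $\ell = \mathsf{D}(G)+i-1$, I record each element through its coordinates $g_j = (g_{j,1}, \dots, g_{j,r})$, $g_{j,s} \in \Z/p^{a_s}\Z$. To a subset $I \subseteq \{1,\dots,\ell\}$ I attach its indicator $x = (x_1,\dots,x_{\ell}) \in \{0,1\}^{\ell}$, so that the corresponding subsequence is a zero-sum subsequence precisely when the $r$ linear forms $L_s(x) = \sum_{j} x_j g_{j,s}$ satisfy $L_s(x) \equiv 0 \pmod{p^{a_s}}$ for all $s$, its length being $\sum_j x_j$.

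Next I would introduce the weight. Suppose, for contradiction, that every non-empty zero-sum subsequence $S'$ of $S$ satisfies $\card{S'} \equiv b \pmod p$ for some $b \in \mathcal{A}$, and set
\[
P(x) = \prod_{b \in \mathcal{A}} \Bigl( \textstyle\sum_{j=1}^{\ell} x_j - b \Bigr) \in \Z[x_1,\dots,x_{\ell}],
\]
a polynomial of degree $\card{\mathcal{A}} = i-1$. The role of $P$ is twofold: for every zero-sum $x$ the value $P(x)$ is $\equiv 0 \pmod p$ as soon as the length $\sum_j x_j$ lies in $\mathcal{A}$ modulo $p$, hence by the contradiction hypothesis $P(x) \equiv 0 \pmod p$ for \emph{every} non-empty zero-sum subsequence; whereas at the empty subsequence $x=0$ one has $P(0) = \prod_{b \in \mathcal{A}}(-b)$, which is a unit modulo $p$, because $\mathcal{A} \subseteq \llbracket 1, p-1 \rrbracket$ forces $0 \notin \mathcal{A}$ and $b \not\equiv 0 \pmod p$ for each $b \in \mathcal{A}$.

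I would then feed $P$ together with $L_1, \dots, L_r$ and the moduli $p^{a_1}, \dots, p^{a_r}$ into Theorem A.$1$ of \cite{Alon84}, whose conclusion is that
\[
T := \sum_{\substack{x \in \{0,1\}^{\ell} \\ L_s(x) \equiv 0\ (p^{a_s})\ \forall s}} (-1)^{x_1 + \dots + x_{\ell}}\, P(x) \equiv 0 \pmod p,
\]
provided the degree budget $\deg P + \sum_{s=1}^{r}(p^{a_s}-1)\deg L_s < \ell$ is met. This is where the length hypothesis enters: since each $\deg L_s = 1$ and $\sum_{s}(p^{a_s}-1) \le \mathsf{D}(G)-1$ (the elementary lower bound for the Davenport constant, realised by taking each generator of $C_{p^{a_s}}$ with multiplicity $p^{a_s}-1$), the left-hand side is at most $(i-1)+(\mathsf{D}(G)-1) = \ell - 1 < \ell$, so the hypothesis holds with room to spare. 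The sum $T$ runs over all zero-sum subsequences of $S$; by the previous paragraph every non-empty term vanishes modulo $p$, so only $x=0$ survives and $T \equiv (-1)^0 P(0) = \prod_{b \in \mathcal{A}}(-b) \not\equiv 0 \pmod p$. This contradicts $T \equiv 0 \pmod p$, and the contradiction establishes the theorem.

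I expect the only delicate point to be the correct invocation of the Alon--Friedland--Kalai inequality in its weighted, signed form over $\{0,1\}^{\ell}$ --- in particular matching the moduli $p^{a_s}$ to the cyclic factors $C_{p^{a_s}}$, and checking that the alternating sign $(-1)^{x_1+\dots+x_{\ell}}$ is harmless here, which it is, since under the contradiction hypothesis the weight $P$ kills every non-empty zero-sum term regardless of its sign. The remainder is bookkeeping; I would stress that the hypothesis $i \le p$ is used exactly to guarantee the existence of an $(i-1)$-subset $\mathcal{A}$ of $\llbracket 1,p-1\rrbracket$ avoiding $0$, which is precisely what makes $P(0)$ invertible modulo $p$ and drives the final contradiction.
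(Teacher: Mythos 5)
Your strategy is workable, but as written it contains a genuine gap: the statement you invoke is not Theorem A.$1$ of Alon--Friedland--Kalai. Theorem A.$1$ is purely existential --- under the counting hypothesis $m \geq \sum_j\left(p^{d_j}-\mathrm{card}_p(\mathcal{S}_j)\right)+1$ it produces a non-empty $I$ with $\sum_{i \in I} a_{i,j} \equiv s_j \pmod{p^{d_j}}$ for some $s_j \in \mathcal{S}_j$. The weighted, signed congruence $T \equiv 0 \pmod p$ that you attribute to it is not its conclusion but essentially the engine of its proof, and you would have to prove it yourself. For a single prime modulus it does follow easily: replace the constraint by the indicator $1-L_s(x)^{p-1}$ and use the fact that the alternating sum over $\{0,1\}^{\ell}$ annihilates any polynomial of total degree $<\ell$. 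But for a prime-power modulus $p^{a_s}$ with $a_s \geq 2$ the naive indicator fails ($1-L^{\phi(p^{a})}$ detects coprimality to $p$, not vanishing modulo $p^{a}$), and the degree cost $p^{a_s}-1$ you assert requires passing to integer-valued polynomials: by Lucas's theorem the indicator of $y \equiv 0 \pmod{p^{a}}$ is, modulo $p$, $\prod_{t=0}^{a-1}\bigl(1-\binom{y}{p^t}^{p-1}\bigr)$, of degree $\sum_{t=0}^{a-1}p^t(p-1)=p^{a}-1$, and the alternating cube sum of an integer-valued polynomial of degree $<\ell$ still vanishes exactly, being an $\ell$-fold finite difference. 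Once this lemma is supplied, the rest of your argument is correct: under the contradiction hypothesis every non-empty zero-sum term is killed by $P$ modulo $p$ regardless of sign, only $x=0$ survives, and $P(0)=\prod_{b \in \mathcal{A}}(-b)$ is a unit since $\mathcal{A} \subseteq \llbracket 1,p-1 \rrbracket$.

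It is also worth contrasting your route with the paper's, which uses Theorem A.$1$ exactly as stated, with no contradiction and no auxiliary polynomial: it adjoins to the $r$ group coordinates an $(r+1)$-st coordinate equal to $1$ for every element of $S$, sets $d_{r+1}=1$, $\mathcal{S}_j=\{0\}$ for $j \leq r$ and $\mathcal{S}_{r+1}=\llbracket 0,p-1 \rrbracket \setminus \mathcal{A}$, verifies that $\sum_j\left(p^{d_j}-\mathrm{card}_p(\mathcal{S}_j)\right)+1=\mathsf{D}(G)+i-1$, and reads off directly a non-empty $I$ which is simultaneously zero-sum and has $\left|I\right| \bmod p \notin \mathcal{A}$. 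Your observation that only the elementary lower bound $\mathsf{D}(G) \geq 1+\sum_s\left(p^{a_s}-1\right)$ enters the degree count is correct and is shared by the paper's computation. In short: same underlying tool, but the paper's one-step reduction via the extra length coordinate avoids precisely the lemma your version leaves unproved.
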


Consequently, Theorem \ref{La Conjecture de Gao est vraie pour les p-groupes abeliens} gives the existence of non-empty zero-sum subsequences whose length avoids certain remainders modulo $p$. In addition, this result applies to 'short' sequences, the length of which is close to $\mathsf{D}(G)$, thus allowing one to tackle Gao's conjecture, whereas other existing results with a similar flavor (see for instance \cite[Theorem $1.2$]{Schmid01}) hold for longer sequences only. In particular, Theorem \ref{La Conjecture de Gao est vraie pour les p-groupes abeliens} provides the following insight into dispersive sequences.  

\begin{corollary}
\label{Theorem on dispersive sequences}
Let $G$ be a finite Abelian $p$-group, and let $S$ be a sequence over $G$ with $\left|S\right| = \mathsf{D}(G)+i-1$, where $i \geq 1$. The following two statements hold.

\begin{itemize}
\item[$(i)$] If $i \geq 2$ and $S$ contains a zero-sum subsequence $S'$ with $p \nmid\left|S'\right|$, then $S$ is dispersive. 
\item[$(ii)$] If $S$ contains no non-empty zero-sum subsequence $S'$ with $p \mid \left|S'\right|$, then $i \leq p-1$, and $S$ has to contain at least $i$ non-empty zero-sum subsequences with pairwise distinct lengths.
\end{itemize} 
\end{corollary}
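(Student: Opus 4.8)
The plan is to derive both statements directly from Theorem~\ref{La Conjecture de Gao est vraie pour les p-groupes abeliens} by recasting the combinatorial conditions in terms of the residues modulo $p$ of the lengths of zero-sum subsequences. The guiding observation is that two nonempty zero-sum subsequences whose lengths are incongruent modulo $p$ automatically have distinct lengths, so it suffices to control which residue classes occur among such lengths. This is exactly what the main theorem provides: choosing an $(i-1)$-subset $\mathcal{A}$ of $\llbracket 1,p-1\rrbracket$ lets us forbid $i-1$ prescribed nonzero residues while still producing a nonempty zero-sum subsequence.

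For statement $(i)$, I would set $b_0 \equiv |S'| \pmod p$, noting that $b_0 \in \llbracket 1,p-1\rrbracket$ precisely because $p \nmid |S'|$. If $i \le p$, I apply Theorem~\ref{La Conjecture de Gao est vraie pour les p-groupes abeliens} to $S$ with an $(i-1)$-subset $\mathcal{A} \ni b_0$, which exists since $1 \le i-1 \le p-1$; the resulting subsequence $S''$ then satisfies $|S''| \not\equiv b_0 \equiv |S'| \pmod p$, so $|S''| \neq |S'|$ and $S$ is dispersive. If $i > p$, I first pass to a subsequence of $S$ of length exactly $\mathsf{D}(G)+p-1$ and apply the theorem there with $\mathcal{A} = \llbracket 1,p-1\rrbracket$, producing a zero-sum subsequence $S''$ of $S$ with $p \mid |S''|$; again $|S''| \neq |S'|$. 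The hypothesis $i \ge 2$ enters exactly here, guaranteeing $\mathcal{A} \neq \emptyset$ so that the residue $b_0$ can actually be forbidden.

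For statement $(ii)$, assume $S$ has no nonempty zero-sum subsequence of length divisible by $p$. To obtain $i \le p-1$ I argue by contradiction: if $i \ge p$, then $|S| \ge \mathsf{D}(G)+p-1$, and applying the theorem (to $S$ itself when $i=p$, or to a length-$(\mathsf{D}(G)+p-1)$ subsequence when $i > p$) with $\mathcal{A} = \llbracket 1,p-1\rrbracket$ yields a zero-sum subsequence of length divisible by $p$, contradicting the hypothesis. Once $i \le p-1$ is secured, let $R \subseteq \llbracket 1,p-1\rrbracket$ be the set of residues modulo $p$ realized by nonempty zero-sum subsequences of $S$ (so $0 \notin R$). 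I claim $|R| \ge i$: otherwise $|R| \le i-1$, and I can extend $R$ to an $(i-1)$-subset $\mathcal{A}$ of $\llbracket 1,p-1\rrbracket$ and apply the theorem to $S$; the zero-sum subsequence it produces has a length whose residue lies in $R \subseteq \mathcal{A}$ yet is forbidden to lie in $\mathcal{A}$, which is absurd. Selecting one zero-sum subsequence per realized residue then yields at least $i$ nonempty zero-sum subsequences of pairwise distinct lengths.

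I expect the arguments to be short once this translation is in place. The only genuinely delicate points are the bookkeeping when $i > p$, where the main theorem does not apply to $S$ directly and one must descend to a subsequence of length $\mathsf{D}(G)+p-1$, and the correct placement of the forbidden residues inside $\mathcal{A}$ in each case. Conceptually, the main step is recognizing that the residue modulo $p$ is the right invariant to track, after which Theorem~\ref{La Conjecture de Gao est vraie pour les p-groupes abeliens} does essentially all of the work.
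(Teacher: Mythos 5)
Your proposal is correct and takes essentially the same route as the paper: both parts are deduced from Theorem~\ref{La Conjecture de Gao est vraie pour les p-groupes abeliens} by tracking the lengths of zero-sum subsequences modulo $p$ and choosing a suitable forbidden $(i-1)$-subset $\mathcal{A}$. The differences are only organizational: for $(i)$ the paper avoids your case split $i \le p$ versus $i > p$ by always passing to a subsequence $T$ of length $\mathsf{D}(G)+1$ and taking $\mathcal{A}=\{r\}$ where $r \equiv \left|S'\right| \pmod p$, and for $(ii)$ the paper runs an induction on $k$, extending $\{r_1,\dots,r_{k-1}\}$ to an $(i-1)$-subset at each step, whereas your non-inductive argument on the set $R$ of realized residues is equally valid and even yields the marginally stronger conclusion that the $i$ lengths can be chosen pairwise incongruent modulo $p$.
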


In Section \ref{Section p-groupes elementaires}, we give a proof of Theorem \ref{La Conjecture de Gao est vraie pour les p-groupes abeliens}, in the special case of elementary $p$-groups, using the polynomial method. 
Since this proof is short and may be relevant in its own right, we will present it in full. 

\medskip
In Section \ref{Section groupes de rang deux}, we then prove the following theorem, which extends Statement $(ii)$ of Theorem \ref{Theoreme Gao 1} to every finite Abelian group of rank two. The proof of this theorem relies on a structural result obtained by Schmid \cite{Wolfgang}, which is a characterization of long minimal zero-sum sequences over these groups, provided that a suitable divisor of their exponent satisfies Property B. 

\begin{theorem}\label{Groupes de rang deux} Let $G \simeq C_{m} \oplus C_{mn},$ where $m,n \in \mathbb{N}^*$ and $m \geq 2$. Let also $S$ be a normal sequence over $G$ with $\left|S\right|=\mathsf{D}(G)+i-1$, where $i \in \llbracket 1,m-1 \rrbracket$. If $m$ satisfies Property B, then $S$ is of the form $S=0^iT$, where $T$ is a zero-sumfree sequence.  
\end{theorem}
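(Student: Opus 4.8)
The plan is to first reduce the statement to an assertion about zero-free sequences, and then to exploit W.~Schmid's structural description of the long minimal zero-sum sequences over $G$. Throughout I use that $\mathsf{D}(G)=m+mn-1$ and that, since $S$ is normal with $|S|=\mathsf{D}(G)+i-1$, every non-empty zero-sum subsequence of $S$ has length at most $i$. Writing $v_0=\mathsf{v}_0(S)$ for the multiplicity of $0$ in $S$, the subsequence $0^{v_0}$ is zero-sum, so normality gives $v_0\le i$. If $v_0=i$, then for any non-empty zero-sum subsequence $W$ of the zero-free part $S_0=S\,0^{-v_0}$ the product $0^iW$ would be a zero-sum subsequence of length $i+|W|>i$, which is excluded; hence $S_0$ is zero-sumfree and $S=0^iT$ as desired. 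So I only have to rule out the case $v_0<i$. In that case $S_0$ is zero-free of length $\mathsf{D}(G)+j-1$ with $j:=i-v_0\in\llbracket 1,m-1\rrbracket$, and every non-empty zero-sum subsequence $W$ of $S_0$ satisfies $|W|\le j$ (because $0^{v_0}W$ is then a zero-sum subsequence of $S$). It thus suffices to prove that no zero-free sequence $V$ over $G$ of length $\mathsf{D}(G)+j-1$, with $1\le j\le m-1$, can have all its non-empty zero-sum subsequences of length at most $j$.

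I would prove this by contradiction. The case $j=1$ is immediate: a zero-free sequence has no zero-sum subsequence of length $1$, so $V$ would be zero-sumfree of length $\mathsf{D}(G)>\mathsf{D}(G)-1$, which is impossible. Assume $j\ge 2$. Since $|V|\ge\mathsf{D}(G)$, the sequence $V$ has a non-empty zero-sum subsequence; let $W$ be one of maximal length, so $|W|\le j$. If the complement $R:=V\,W^{-1}$ contained a non-empty zero-sum subsequence $W'$, then $WW'$ would be a longer zero-sum subsequence, contradicting the maximality of $W$; hence $R$ is zero-sumfree, and therefore $|R|\le\mathsf{D}(G)-1$. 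Consequently $|W|=|V|-|R|\ge j$, whence $|W|=j$ and $|R|=\mathsf{D}(G)-1$, so $R$ is a zero-sumfree sequence of maximal length. Prepending the element $f:=-\sigma(R)\ne 0$ produces a minimal zero-sum sequence $fR$ of length $\mathsf{D}(G)$, to which Schmid's theorem applies (here the hypothesis that $m$ satisfies Property B is used). It yields a basis $(e_1,e_2)$ of $G$ with $\mathrm{ord}(e_1)=m$ and $\mathrm{ord}(e_2)=mn$ in which $R$ is, up to its precise normal form, a power of $e_1$ together with elements of the shape $x_\nu e_1+e_2$.

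It then remains to manufacture a zero-sum subsequence of $V=RW$ of length strictly greater than $j$, which is the desired contradiction. The idea is to lengthen $W$: if some element $g$ of $W$ can be written as $g=\sigma(Z_R)$ for a subsequence $Z_R$ of $R$ with $|Z_R|\ge 2$, then $Z:=Z_R\,(W\,g^{-1})$ is a zero-sum subsequence of $V$ of length $|Z_R|+(j-1)\ge j+1$. Writing each element of $W$ as $\alpha e_1+\beta e_2$ with $\beta\in\llbracket 0,mn-1\rrbracket$, the relation $\sigma(W)=0$ forces $\sum\beta\equiv 0\ (\mathrm{mod}\ mn)$. If some element has $\beta\ge 2$, it is represented by the corresponding $\beta$ elements $x_\nu e_1+e_2$ of $R$, adjusted by copies of $e_1$, giving $|Z_R|\ge\beta\ge 2$. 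Otherwise all the $\beta$'s lie in $\{0,1\}$, and since their sum is a multiple of $mn$ while $0\le\sum\beta\le j\le m-1<mn$, they all vanish; so every element of $W$ is a multiple of $e_1$. As $W$ is a zero-free zero-sum sequence over $\langle e_1\rangle\cong C_m$ of length $j$ with $1\le j\le m-1$, it cannot equal $e_1^{\,j}$ (otherwise $\sigma(W)=j\,e_1\ne 0$), so some element $\alpha e_1$ of $W$ has $\alpha\ge 2$ and is the sum of $\alpha\ge 2$ copies of $e_1$ taken from $R$. In every case a zero-sum subsequence of length exceeding $j$ is produced, completing the contradiction and hence the proof.

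The main obstacle is the invocation of Schmid's structural theorem. One must extract from it the exact normal form of the maximal zero-sumfree sequence $R$ (equivalently, of the minimal zero-sum sequence $fR$ after deleting one element, which may be either a copy of $e_1$ or one of the terms $x_\nu e_1+e_2$), and then verify that in each of these forms the required subsequence $Z_R$ of length at least two is genuinely available inside $R$. The only delicate point is the exact number of copies of $e_1$ present, which is dealt with by absorbing the full block of $x_\nu e_1+e_2$ terms when a copy of $e_1$ is missing; everything else is elementary once the structural input is in place.
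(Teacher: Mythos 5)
Your reduction is correct and is genuinely more elementary than the paper's opening move: instead of invoking the Gao--Zhuang structure theorem (Theorem \ref{Gao}), you split off $0^{v_0}$, take a zero-sum subsequence $W$ of maximal length, and observe that the complement $R$ is then zero-sumfree of maximal length $\mathsf{D}(G)-1$, so that $fR$ with $f=-\sigma(R)$ is a minimal zero-sum sequence of length $\mathsf{D}(G)$ to which Schmid's Theorem \ref{Wolfgang} applies. The gap is exactly where you say the ``main obstacle'' lies, and it is not a routine verification but the actual content of the proof in your approach: the exchange lemma (every $g$ in $W$ equals $\sigma(Z_R)$ for some $Z_R \mid R$ with $\lvert Z_R\rvert \geq 2$) is only sketched, and the sketch is valid for just one of Schmid's normal forms. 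Theorem \ref{Wolfgang} does \emph{not} always yield a basis $(e_1,e_2)$ with the order-$m$ element repeated: in case $(i)$ the repeated element may be the one of order $mn$ (the case $j=2$), and in case $(ii)$ the pair $\{g_1,g_2\}$ is a generating set but not a basis, with $mg_1=mg_2$ when $s\geq 2$, hence $\mathrm{ord}(g_1)=mn$. There the coordinates $\alpha,\beta$ are not unique, so ``$\sigma(W)=0$ forces $\sum\beta\equiv 0 \ (\mathrm{mod}\ mn)$'' fails (after normalizing one only gets a congruence modulo $m$), and your terminal case ``$W$ is a zero-sum sequence over $\langle e_1\rangle\cong C_m$'' is false, since $\langle g_1\rangle\cong C_{mn}$. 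The quantitative availability of $Z_R$ also breaks in case $(ii)$: the block $\prod(-x_ig_1+g_2)$ can have as few as $m$ elements (when $s=n$) while the needed $\beta$ can be large, and the number of copies of $g_1$ required as adjustment is a residue modulo $mn$, not modulo $m$, which can exceed the $sm-1$ copies present; your ``absorb the full block'' patch computes to $(1-sm)g_1$ rather than $g_1$ when $mg_1=mg_2$, so it does not close this case either.

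For comparison, the paper sidesteps all of this: Theorem \ref{Gao} gives $S=0^kTU$ with $\mathrm{Supp}(U)\subseteq\mathrm{Supp}(T)$, and that support condition places the \emph{entire} sequence $TU$ (not merely the maximal zero-sumfree part) into Schmid's normal form $g_1^{\ell_1}\prod(-x_ig_1+g_2)$. A single congruence computation (Theorem \ref{Longues sous-suites de somme zero}) then shows every non-empty zero-sum subsequence of $TU$ has length divisible by $m$, hence $\geq m > i$, forcing $U$ to be empty. Your complement $R$ carries no support control over $W$, which is precisely why you are pushed into representing arbitrary elements of $G$ as subsums of $R$ across all of Schmid's cases. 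To complete your route you would need to prove the exchange lemma in full generality, including the non-split case $(ii)$ with its non-unique coordinates and its tight counts of available elements, or else re-import a support condition such as the Gao--Zhuang theorem, at which point you have essentially reconstructed the paper's argument.
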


Finally, in Section \ref{Conclusion}, we propose two general conjectures suggested by the results proved in this paper.

\section{The case of finite Abelian $p$-groups}
\label{Section Main Theorem}
As stated in Section \ref{Section New Results}, we prove our Theorem \ref{La Conjecture de Gao est vraie pour les p-groupes abeliens} by using the following theorem of Alon, Friedland and Kalai (see \cite[Theorem A.$1$]{Alon84}). Before stating this result, we need to introduce the following notation. Let $\mathbb{Z}$ be the set of integers. For $\mathcal{S} \subseteq \mathbb{Z}$ and $m \in \mathbb{Z}$, we denote by $\text{card}_m(\mathcal{S})$ the number of distinct elements in $\mathcal{S}$ modulo $m$. 

\begin{theorem}\label{Theoreme Alon}
Let $p$ be a prime, and let $1 \leq d_1 \leq \dots \leq d_n$ be $n$ integers. For $1 \leq j \leq n$, let $\mathcal{S}_j \subseteq \mathbb{Z}$ be a set of integers containing $0$. For $1 \leq i \leq m$, let $(a_{i,1},\dots,a_{i,n})$ be a vector with integer coordinates. If
$$m \geq \displaystyle\sum^n_{j=1}\left(p^{d_j}-\text{\em card\em}_p(\mathcal{S}_j)\right)+1,$$
then a subset $\emptyset \subsetneq I \subseteq \left\{1,\dots,m\right\}$ and numbers $s_j \in \mathcal{S}_j$ $(1 \leq j \leq n)$ exist such that
$$\displaystyle\sum_{i \in I} a_{i,j} \equiv s_j \text{ } (\text{\em mod \em} p^{d_j}), \text{ for all } 1 \leq j \leq n.$$
\end{theorem}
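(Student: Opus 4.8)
The plan is to prove this by the polynomial method, encoding the sought subset $I$ by its indicator vector $x=(x_1,\dots,x_m)\in\{0,1\}^m$ and working with the integer linear forms $N_j(x)=\sum_{i=1}^m a_{i,j}x_i$ for $1\le j\le n$. With this notation the conclusion asks for a \emph{nonzero} $x\in\{0,1\}^m$ such that $N_j(x)\bmod p^{d_j}\in\mathcal S_j$ for every $j$; call such an $x$ \emph{admissible}. The empty set $x=0$ is already admissible because $0\in\mathcal S_j$ for all $j$, so it suffices to produce a \emph{second} admissible vector. First I would attach to each coordinate $j$ a one-variable integer polynomial $\psi_j$ that \emph{detects} the admissible residues: its reduction modulo $p$ should vanish at every integer $N$ with $N\bmod p^{d_j}\notin\mathcal S_j$, while $\psi_j(0)\not\equiv 0\pmod p$. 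The crucial point is to achieve this with
$$\deg\psi_j=p^{d_j}-\mathrm{card}_p(\mathcal S_j),$$
which is where the quantity $\mathrm{card}_p(\mathcal S_j)$ enters.

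To build $\psi_j$ I would use Lucas' theorem in the form $\binom{N}{K}\equiv\prod_k\binom{N_k}{K_k}\pmod p$, where $N=\sum_k N_k p^k$ and $K=\sum_k K_k p^k$ are base-$p$ expansions. This yields, for a single class, the clean identity that $\binom{N-1}{p^{d}-1}$ is congruent to $1$ or $0$ modulo $p$ according as $p^{d}\mid N$ or not, i.e. an integer-valued indicator of a residue class modulo $p^{d}$ of degree $p^{d}-1$. The admissible residues of $\mathcal S_j$ modulo $p$ can then be peeled off to lower the degree from $p^{d_j}-1$ down to $p^{d_j}-\mathrm{card}_p(\mathcal S_j)$, exactly as the product $\prod_{t}(N-t)$ over the forbidden residues does in the base case $d_j=1$. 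Setting $\Phi(x)=\prod_{j=1}^n\psi_j\bigl(N_j(x)\bigr)$, this is a polynomial in $x_1,\dots,x_m$ whose reduction modulo $p$ vanishes at every vector that is not admissible, whose value at $x=0$ is nonzero, and whose degree is at most $\sum_{j=1}^n\bigl(p^{d_j}-\mathrm{card}_p(\mathcal S_j)\bigr)\le m-1$ by hypothesis.

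The final step is a Chevalley--Warning-type averaging over the discrete cube. Because each $x_i$ takes values in $\{0,1\}$, every monomial reduces to a multilinear one, and a direct computation gives
$$\sum_{x\in\{0,1\}^m}(-1)^{x_1+\dots+x_m}\,x^{\alpha}=0$$
for every monomial $x^{\alpha}$ that omits at least one variable, the only surviving contribution coming from the full monomial $x_1\cdots x_m$. Since $\deg\Phi\le m-1$, its multilinear reduction contains no $x_1\cdots x_m$ term, so $\Sigma:=\sum_{x\in\{0,1\}^m}(-1)^{x_1+\dots+x_m}\Phi(x)\equiv 0\pmod p$. On the other hand $\Phi$ vanishes modulo $p$ off the admissible vectors, so $\Sigma$ collapses to a signed sum over the admissible $x$; if $x=0$ were the only admissible vector we would get $\Sigma\equiv\Phi(0)\not\equiv 0\pmod p$, a contradiction. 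Hence some admissible $x\neq 0$ exists, and the corresponding $I=\{i:x_i=1\}$ together with $s_j:=N_j(x)\bmod p^{d_j}\in\mathcal S_j$ is the desired configuration.

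The main obstacle is the construction and degree bookkeeping of the detectors $\psi_j$: one must \emph{simultaneously} guarantee the soundness property (vanishing modulo $p$ on every forbidden residue class modulo $p^{d_j}$, for \emph{all} integer inputs, which is precisely where the periodicity furnished by Lucas' theorem is essential), the nonvanishing at $0$, and the sharp degree $p^{d_j}-\mathrm{card}_p(\mathcal S_j)$. Everything else---the encoding, the product $\Phi$, and the alternating-sum vanishing---is then routine, the degree hypothesis on $m$ being used exactly once, to ensure $\deg\Phi<m$.
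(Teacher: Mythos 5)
First, a point of comparison: the paper itself contains no proof of Theorem~\ref{Theoreme Alon} --- it is quoted verbatim from Alon--Friedland--Kalai \cite{Alon84} (Theorem~A.1) --- so your proposal can only be measured against that original argument, and your outer shell is indeed the same mechanism AFK use: encode $I$ by $x\in\{0,1\}^m$, observe $x=0$ is admissible since $0\in\mathcal S_j$, form $\Phi=\prod_j\psi_j(N_j(x))$ of degree at most $\sum_j(p^{d_j}-\mathrm{card}_p(\mathcal S_j))\le m-1$, and kill it with the $m$-fold alternating sum over the cube (the identity $\sum_{x\in\{0,1\}^m}(-1)^{x_1+\dots+x_m}x^\alpha=0$ for monomials missing a variable, plus multilinear reduction, is correct, and you use the hypothesis on $m$ exactly once, as you say). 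That part is complete and right.

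The genuine gap is the construction of the detectors $\psi_j$, which you correctly flag as ``the main obstacle'' and then dispatch with ``the admissible residues \dots can then be peeled off \dots exactly as \dots in the base case $d_j=1$.'' It is not analogous to the base case, and no actual polynomial is exhibited. Two concrete reasons why the step as described would fail. First, for $d_j\ge 2$ no polynomial with \emph{integer coefficients} can satisfy your specification at all: the values of such a polynomial modulo $p$ are periodic in $N$ with period $p$, whereas already for $\mathcal S_j=\{0\}$, $d_j=2$ the detector must distinguish the class of $0$ from the forbidden class of $p$; so $\psi_j$ must be integer-valued with rational coefficients (your $\binom{N-1}{p^{d}-1}$ is of this kind, but then you should also note that $\Sigma$ remains an integer and the alternating-sum identity holds over $\mathbb{Q}$, so $p\mid\Sigma$ still follows). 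Second, the literal analogue of the $d_j=1$ recipe --- the product $\prod_t(N-t)$ over the residues $t$ forbidden modulo $p^{d_j}$ --- does have small enough degree and does vanish on every forbidden class, but it violates $\psi_j(0)\not\equiv 0\pmod p$ as soon as some forbidden residue is divisible by $p$ (again $\mathcal S_j=\{0\}$, $d_j=2$, $t=p$). What is actually needed, and what constitutes the heart of AFK's lemma, is the following: pick $s_1=0,s_2,\dots,s_{c}\in\mathcal S_j$ with pairwise distinct residues modulo $p$ (where $c=\mathrm{card}_p(\mathcal S_j)$), expand each class indicator $\binom{N-s_k+p^{d_j}-1}{p^{d_j}-1}$ in the binomial basis $\binom{N}{k}$ via Vandermonde convolution, and find an $\mathbb{F}_p$-linear combination whose coefficients on $\binom{N}{k}$ vanish for all $k>p^{d_j}-c$; by Lucas the resulting $(c-1)\times c$ coefficient matrix reduces to $\bigl[\binom{\beta_k}{u}\bigr]$ with $\beta_1,\dots,\beta_c$ distinct modulo $p$ and $0\le u\le c-2$, whose maximal minors are nonzero Vandermonde-type determinants, so a kernel vector exists with \emph{all} coordinates nonzero --- in particular nonzero weight on the class of $0$, giving $\psi_j(0)\not\equiv 0$ and the sharp degree $p^{d_j}-c$. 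None of this cancellation argument appears in your proposal; since, as you yourself observe, everything outside the detector lemma is routine, that lemma \emph{is} the proof, and asserting it by analogy leaves the argument incomplete --- though your framework is the right one and is repairable exactly along the line just sketched.
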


For instance, it may be observed that Theorem \ref{Theoreme Alon} provides the exact value for the Davenport constant of a finite Abelian $p$-group, which was originally obtained by van Emde Boas, Kruyswijk and Olson (see \cite{EmdeBoas67,Olso69a}). 

\medskip
Indeed, let $G \simeq C_{p^{d_1}} \oplus \dots \oplus C_{p^{d_r}}$, where $1 \leq d_1 \leq \dots \leq d_r \in \mathbb{N}$, be a finite Abelian $p$-group, and let us set $\mathsf{D}^*(G)=\sum^r_{i=1} \left(p^{d_i}-1\right)+1.$ On the one hand, an elementary construction (see \cite[Proposition $5.1.8$]{GeroKoch05}) implies that $\mathsf{D}(G) \geq \mathsf{D}^*(G)$. On the other hand, let $(e_1,\dots,e_r)$ be a basis of $G$, where $\text{ord}(e_i)=p^{d_i}$ for all $i \in \llbracket 1,r \rrbracket$, and let $S=g_1 \cdot \hdots \cdot g_m$ be a sequence over $G$ of length $m=\mathsf{D}^*(G)$. Setting $g_i=a_{i,1}e_1 + \dots + a_{i,r}e_r$ for all $i \in \llbracket 1,m \rrbracket$, and $\mathcal{S}_j=\left\{0\right\}$ for every $j \in \llbracket 1,r \rrbracket$, one readily obtains the converse inequality $\mathsf{D}(G) \leq \mathsf{D}^*(G)$ by Theorem \ref{Theoreme Alon}, thus $\mathsf{D}(G) = \mathsf{D}^*(G)$ for every finite Abelian $p$-group. 

\medskip
Using Theorem \ref{Theoreme Alon}, we can now prove the main theorem of this paper.

\begin{proof}[Proof of Theorem \ref{La Conjecture de Gao est vraie pour les p-groupes abeliens}] 
Let $G \simeq C_{p^{d_1}} \oplus \dots \oplus C_{p^{d_r}}$, where $1 \leq d_1 \leq \dots \leq d_r \in \mathbb{N}$, be a finite Abelian $p$-group. Let $(e_1,\dots,e_r)$ be a basis of $G$, where $\text{ord}(e_i)=p^{d_i}$ for all $i \in \llbracket 1,r \rrbracket$, and let $S=g_1 \cdot \hdots \cdot g_m$ be a sequence over $G$ of length $m=\mathsf{D}(G)+i-1$, where $i \in \llbracket 1,p \rrbracket$. 
The elements of $S$ can be written in the following way:
$$\begin{array}{ccccccc}
g_1 & = & a_{1,1}e_1 & + & \dots & + & a_{1,r}e_r,\\
\vdots & & \vdots & & & &  \vdots\\
g_m & = & a_{m,1}e_1 & + & \dots & + & a_{m,r}e_r.
\end{array}$$
Now, let us set $n=r+1$ and $d_n=1$. Let also $\mathcal{A}$ be a $\left(i-1\right)$-subset of $\llbracket 1,p-1 \rrbracket$, and $\bar{\mathcal{A}}=\llbracket 0,p-1 \rrbracket \backslash \mathcal{A}$. For all $j \in \llbracket 1,n \rrbracket$, we set 
$$
\mathcal{S}_j=
\begin{cases}
\text{ } \bar{\mathcal{A}} \hspace{0.23cm} \text{ if } j=n,\\ 
\left\{0\right\} \text{ otherwise.}
\end{cases}
$$
Since $\left(p^{d_n}-\text{card}_p(\mathcal{S}_n)\right)=\left(p-\left|\bar{\mathcal{A}}\right|\right)=\left|\mathcal{A}\right|=i-1$, one obtains
$$\sum^{n}_{j=1}\left(p^{d_j}-\text{card}_p(\mathcal{S}_j)\right) + 1 = \mathsf{D}(G) + i - 1 = m.$$
Therefore, using Theorem \ref{Theoreme Alon}, there exists a subset $\emptyset \subsetneq I \subseteq \left\{1,\dots,m\right\}$ such that
$$
\begin{cases}
\displaystyle\sum_{i \in I} a_{i,j} \equiv 0 \text{ } (\text{mod } p^{d_j}) \text{ for all } 1 \leq j \leq r, \\
\displaystyle\sum_{i \in I} 1 = \left|I\right| \equiv s \text{ } (\text{mod } p) \text{ for some } s \in \bar{\mathcal{A}}.
\end{cases}
$$
Consequently, the sequence $S'=\prod_{i \in I}g_i$ is a non-empty zero-sum subsequence of $S$ such that $\left|S'\right| = \left|I\right| \not\equiv b$ $(\text{mod } p)$ for all $b \in \mathcal{A}$, which is the desired result. 
\end{proof}

We can now prove Theorem \ref{Structure des sequences normales d'un p-groupe abelien} and Corollary \ref{Theorem on dispersive sequences}.
\begin{proof}[Proof of Theorem \ref{Structure des sequences normales d'un p-groupe abelien}] We prove this theorem by induction on $i \in \llbracket 1,p-1 \rrbracket$. If $i=1$, the desired result is straightforward. Now, let $i \geq 2$, and let us assume that the assertion of Theorem \ref{Structure des sequences normales d'un p-groupe abelien} is true for every $1 \leq k \leq i-1$. Let $S$ be a normal sequence over $G$ with $\left|S\right|=\mathsf{D}(G)+i-1$, and let $\mathcal{A}$ be a $(i-1)$-subset satisfying $\left\{i\right\} \subseteq \mathcal{A} \subseteq \llbracket 1,p-1 \rrbracket$. Then, Theorem \ref{La Conjecture de Gao est vraie pour les p-groupes abeliens} gives the existence of a non-empty zero-sum subsequence $S' \mid S$ such that $\left|S'\right| \not\equiv i \text{ } (\text{mod } p)$. In particular, one has $\left|S'\right| \neq i$, and since $S$ is a normal sequence, we must have $\left|S'\right| \leq i-1$. Now, let $T \mid S$ be the sequence such that $S=TS'$. Then $\left|T\right|=\mathsf{D}(G)+ k - 1$, where $1 \leq k = i-\left|S'\right| \leq i-1$. Moreover, since $S$ is a normal sequence, every non-empty zero-sum subsequence $T' \mid T$ has to satisfy $\left|T'\right| \leq k = i-\left|S'\right|$, that is, $T$ is also a normal sequence. Therefore, the induction hypothesis applied to $T$ implies that $S$ is of the form $S=0^kU$. One has $\left|U\right|=\mathsf{D}(G)+ \ell - 1$, where $1 \leq \ell = i-k \leq i-1$, and since $S$ is a normal sequence, every non-empty zero-sum subsequence $U' \mid U$ has to satisfy $\left|U'\right| \leq \ell = i-k$, that is, $U$ is also a normal sequence. Finally, the induction hypothesis applied to $U$ implies that $S$ is of the form $S=0^k0^{i-k}V=0^iV$, where $V$ is a zero-sumfree sequence over $G$, which completes the proof.   
\end{proof}

\begin{proof}[Proof of Corollary \ref{Theorem on dispersive sequences}] $(i)$ Let $S$ be a sequence over $G$ with $\left|S\right| = \mathsf{D}(G)+i-1$, where $i \geq 2$, such that $S$ contains a zero-sum subsequence $S'$ with $p \nmid \left|S'\right|$. Then, one has $\left|S'\right| = qp + r$ for some integers $q \geq 0$ and $r \in \llbracket 1,p-1 \rrbracket$. Now, let $T$ be any subsequence of $S$ such that $\left|T\right| = \mathsf{D}(G) + 2 -1$. Specifying $\mathcal{A}=\{r\}$ in Theorem \ref{La Conjecture de Gao est vraie pour les p-groupes abeliens}, we obtain the existence of a non-empty zero-sum subsequence $T' \mid T$ such that $\left|T'\right| \not\equiv r \text{ } (\text{mod } p)$. In particular, one obtains $\left|T'\right| \not\equiv \left|S'\right| \text{ } (\text{mod } p)$, which implies $\left|T'\right| \neq \left|S'\right|$. Since $T'$ is also a zero-sum subsequence of $S$, the desired result is proved. 

\medskip
\noindent $(ii)$ Let $S$ be a sequence over $G$ with $\left|S\right| = \mathsf{D}(G)+i-1$, where $i \geq 1$, such that $S$  contains no non-empty zero-sum subsequence $S'$ with $p \mid \left|S'\right|$. If one had $i \geq p$, we would obtain, specifying $\mathcal{A}=\llbracket 1,p-1 \rrbracket$ in Theorem \ref{La Conjecture de Gao est vraie pour les p-groupes abeliens}, that every subsequence $T \mid S$ with $\left|T\right| = \mathsf{D}(G)+p-1$ contains a non-empty zero-sum subsequence $T'$ such that $p \mid \left|T'\right|$, which contradicts the assumption made on $S$. Now, we can prove the second part of the assertion, by induction on $k \in \llbracket 1,i \rrbracket$. If $k=1$, then since $\left|S\right| \geq \mathsf{D}(G)$, $S$ has to contain a zero-sum subsequence $S_1$ such that $p \nmid \left|S_1\right|$, and we are done. Now, let $k \in \llbracket 2,i \rrbracket$, and let us assume the assertion is true for $k-1$, that is, $S$ contains at least $k-1$ non-empty zero-sum subsequences $S_1,\dots,S_{k-1}$ with pairwise distinct lengths. By hypothesis, one has $p \nmid \left|S_j\right|$ for all $j \in \llbracket 1,k-1 \rrbracket$, and we can write $\left|S_j\right| = q_jp + r_j$, where $q_j \geq 0$ and $r_j \in \llbracket 1,p-1 \rrbracket$. Now, let $\mathcal{A}$ be a $(i-1)$-subset satisfying $\left\{r_1,\dots,r_{k-1}\right\} \subseteq \mathcal{A} \subseteq \llbracket 1,p-1 \rrbracket$. Then, by Theorem \ref{La Conjecture de Gao est vraie pour les p-groupes abeliens}, we obtain the existence of a non-empty zero-sum subsequence $S_k \mid S$ such that $\left|S_k\right| \not\equiv b \text{ } (\text{mod } p)$ for all $b \in \mathcal{A}$. In particular, $\left|S_k\right| \neq \left|S_j\right|$ for all $j \in \llbracket 1,k-1 \rrbracket$, and consequently, $S$ contains at least $k$ non-empty zero-sum subsequences $S_1,\dots,S_k$ with pairwise distinct lengths, and the proof is complete. 
\end{proof}

\section{The special case of elementary $p$-groups}
\label{Section p-groupes elementaires} 
In this section, we propose an alternative proof of Theorem \ref{La Conjecture de Gao est vraie pour les p-groupes abeliens}, in the special case of elementary $p$-groups, which uses an algebraic tool introduced by Alon and called the \em Combinatorial Nullstellensatz \em (see \cite{Alon99} for a survey on this method). This polynomial method uses the fact that a non-zero multivariate polynomial over a field cannot vanish on 'large' Cartesian products so as to derive a variety of results in combinatorics, additive number theory and graph theory. This method relies on the following theorem. 

\begin{theorem}[Combinatorial Nullstellensatz]
\label{Combinatorial Nullstellensatz} Let $\mathbb{F}$ be a field and let $f$ be a
polynomial in $\mathbb{F}[x_1,\dots,x_n]$ of total degree
$\deg(f)$, admitting a monomial of the following form:
$$x^{\alpha_1}_1 x^{\alpha_2}_2 \cdots x^{\alpha_n}_n \text{ of degree }
\displaystyle\sum^n_{i=1} \alpha_i=\deg(f).$$
Then, for any choice of $n$ subsets $S_1,\dots ,S_n \subseteq
\mathbb{F}$ such that $|S_i|> \alpha_i$ for all $i \in
\llbracket1,n\rrbracket$, there exists an element $(s_1,\dots,s_n)
\in S_1 \times \cdots \times S_n$ such that one has
$f(s_1,\dots,s_n) \neq 0.$
\end{theorem}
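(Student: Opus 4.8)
The plan is to deduce this coefficient form of the Nullstellensatz from the more elementary vanishing form, so the first step is to introduce the single-variable polynomials $g_i(x_i)=\prod_{s\in S_i}(x_i-s)$, each monic of degree $|S_i|$ in $x_i$. I would first prove a division lemma: for an arbitrary $f\in\mathbb{K}[x_1,\dots,x_n]$ one can write $f=\sum_{i=1}^n h_i g_i + \bar f$, where each cofactor satisfies $\deg(h_i g_i)\leq \deg(f)$ and the remainder $\bar f$ has degree strictly less than $|S_i|$ in each variable $x_i$, with $\deg \bar f\leq \deg f$. This is obtained by repeatedly rewriting any occurrence of $x_i^{|S_i|}$ using the relation $g_i(x_i)=0$; because $g_i$ is monic of degree $|S_i|$ in $x_i$, each such rewriting strictly lowers the $x_i$-degree of a monomial without raising its total degree, so the process terminates and the stated degree bounds hold.

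The second ingredient is the fact that a polynomial $\bar f$ whose degree in each $x_i$ is less than $|S_i|$ and which vanishes on the whole grid $S_1\times\cdots\times S_n$ must be identically zero. I would prove this by induction on $n$: viewing $\bar f$ as a univariate polynomial in $x_n$ over $\mathbb{K}[x_1,\dots,x_{n-1}]$, for each fixed choice of the first $n-1$ coordinates it has degree less than $|S_n|$ yet $|S_n|$ distinct roots, forcing all of its coefficient polynomials to vanish on $S_1\times\cdots\times S_{n-1}$; the inductive hypothesis then kills them. Since the $g_i$ vanish on the grid, $f$ and $\bar f$ agree there, so if $f$ vanishes on the grid then $\bar f$ does too, whence $\bar f=0$ and $f=\sum_i h_i g_i$.

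To finish, I would argue by contradiction. Suppose $f$ vanishes on all of $S_1\times\cdots\times S_n$; after passing to subsets I may assume $|S_i|=\alpha_i+1$ for each $i$. The division lemma gives $f=\sum_i h_i g_i$ with $\deg(h_i g_i)\leq \deg f=\sum_j\alpha_j$ and $\deg g_i=\alpha_i+1$. Now I would extract the coefficient of $x_1^{\alpha_1}\cdots x_n^{\alpha_n}$, a monomial of maximal total degree $\deg f$. Every monomial of total degree $\deg f$ occurring in $h_i g_i$ is the product of a top-degree monomial of $h_i$ with the unique top-degree monomial $x_i^{\alpha_i+1}$ of $g_i$, hence is divisible by $x_i^{\alpha_i+1}$; but $x_1^{\alpha_1}\cdots x_n^{\alpha_n}$ has $x_i$-degree only $\alpha_i$, so it occurs in no $h_i g_i$. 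Thus its coefficient on the right-hand side is $0$, contradicting the hypothesis that it is nonzero in $f$, and therefore $f$ cannot vanish identically on the grid.

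The main obstacle is the simultaneous degree bookkeeping in the division lemma: the argument works only because I can guarantee both the total-degree bound $\deg(h_i g_i)\leq\deg f$ on the cofactors and the per-variable bound on $\bar f$, and both are needed in the final coefficient comparison. The one substantive input beyond this formal manipulation is the multivariate root-counting step, which is where the field hypothesis (or at least the absence of zero divisors) is essential.
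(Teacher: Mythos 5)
Your proof is correct and complete: the division lemma, the grid-vanishing induction, and the final leading-coefficient comparison are all carried out with the right degree bookkeeping (the step identifying the top-degree component of $h_i g_i$ with the product of the leading forms of $h_i$ and $g_i$ also quietly uses, like the root-counting step, that $\mathbb{K}[x_1,\dots,x_n]$ has no zero divisors, which your final remark essentially covers). The paper itself states this theorem without proof, importing it from Alon's survey \cite{Alon99}, and your argument is precisely the standard proof given there, so the two are in full agreement.
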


\begin{proof}[Proof of Theorem \ref{La Conjecture de Gao est vraie pour les p-groupes abeliens} in the special case of elementary $p$-groups.] Let $p$ be a prime, and let $G \simeq C^r_p$ be an elementary $p$-group of rank $r$. Let also $(e_1,\dots,e_r)$ be a basis of $G$, and let $S=g_1 \cdot \hdots \cdot g_m$ be a sequence over $G$ of length $m=\mathsf{D}(G)+k-1$, where $k \in \llbracket 1,p \rrbracket$. The elements of $S$ can be written in the following way:
$$\begin{array}{ccccccc}
g_1 & = & a_{1,1}e_1 & + & \dots & + & a_{1,r}e_r,\\
\vdots & & \vdots & & & &  \vdots\\
g_m & = & a_{m,1}e_1 & + & \dots & + & a_{m,r}e_r.
\end{array}$$
Let $\mathcal{A}$ be a $(k-1)$-subset of $\llbracket 1,p-1 \rrbracket$, and let also $P \in \mathbb{F}_p \left[x_1,\dots,x_m\right]$ be the following polynomial over the finite field $\mathbb{F}_p$ of order $p$,
$$P(x_1,\dots,x_m)=\displaystyle\prod^r_{h=1}\displaystyle\prod^{p-1}_{j=1}\left(\displaystyle\sum^m_{i=1}a_{i,h}x^{p-1}_i - j\right) \displaystyle\prod_{j \in \mathcal{A}}\left(\displaystyle\sum^{m}_{i=1}x^{p-1}_i -j\right)-\delta\displaystyle\prod^m_{i=1}\left(x^{p-1}_i-1\right),$$
where $\delta \in \mathbb{F}_p$ is chosen such that $P(0,\dots,0)=0$. In particular, since no element of $\mathcal{A}$ is a multiple of $p$, one has $\delta \neq 0$. Moreover, the total degree of $$\displaystyle\prod^r_{h=1}\displaystyle\prod^{p-1}_{j=1}\left(\displaystyle\sum^m_{i=1}a_{i,h}x^{p-1}_i - j\right) \displaystyle\prod_{j \in \mathcal{A}}\left(\displaystyle\sum^{m}_{i=1}x^{p-1}_i -j\right)$$ 
being $\left(r(p-1)+(k-1)\right)\left(p-1\right)=\left(\mathsf{D}(G)+k-2\right)\left(p-1\right)< m(p-1)$, we deduce that $\deg(P)=m(p-1)$. Now, since the coefficient of $\prod^m_{i=1}x^{p-1}_i$ is $-\delta \neq 0$, Theorem \ref{Combinatorial Nullstellensatz} implies that there exists a non-zero element $x=(x_1,\dots,x_m) \in \mathbb{F}^m_p $ such that $P(x_1,\dots,x_m) \neq 0$. Consequently, setting $I=\left\{i \in \llbracket 1,m \rrbracket \text{ } | \text{ } x_i \neq 0 \right\}$, we obtain that $S'=\prod_{i \in I}g_i$ is a non-empty zero-sum subsequence of $S$ satisfying $\left|S'\right| = \left|I\right| \not\equiv b \text{ } (\text{mod } p) \text{ for all } b  \in \mathcal{A}$, which completes the proof. 
\end{proof}

\section{The case of finite Abelian groups of rank two}\label{Section groupes de rang deux}
In this section, we prove a result extending Statement $(ii)$ of Theorem \ref{Theoreme Gao 1} to every finite Abelian group of rank two. The proof of this theorem relies on the following result of Schmid \cite{Wolfgang}, which gives a structural characterization of minimal zero-sum sequences of length $\mathsf{D}(G) = m + mn -1$ over the group $G \simeq C_{m} \oplus C_{mn},$ where $m,n \in \mathbb{N}^*$ and $m \geq 2$, under the hypothesis that $m$ satisfies Property B.
\begin{theorem}\label{Wolfgang} Let $G \simeq C_{m} \oplus C_{mn},$ where $m,n \in \mathbb{N}^*$ and $m \geq 2$, be a finite Abelian group of rank two. The following sequences are minimal zero-sum sequences of maximal length.

\begin{itemize}
\item[$(i)$] 
$S = e^{\text{\em ord\em}(e_j)-1}_j \displaystyle\prod^{\text{\em ord\em}(e_k)}_{i=1} \left(-x_ie_j + e_k\right)$ 
where $(e_1,e_2)$ is a basis of $G$ with $\text{\em ord\em}(e_2) = mn$, $\{j,k\}=\{1,2\}$, and $x_i \in \mathbb{N}$ with $\sum^{\text{\em ord\em}(e_k)}_{i=1} x_i \equiv -1 \text{ } (\text{\em mod \em} \text{\em ord\em}(e_j))$.

\item[$(ii)$] $S = g^{sm-1}_1 \displaystyle\prod^{(n+1-s)m}_{i=1} \left(-x_ig_1 + g_2\right)$ where $s \in \llbracket 1,n \rrbracket$, $\{g_1, g_2\}$ is a generating set of $G$ with $\text{\em ord\em}(g_2) = mn$ and such that $s=1$ or $mg_1 = mg_2$,
and $x_i \in \mathbb{N}$ with $\sum^{(n+1-s)m}_{i=1} x_i = m - 1$.
\end{itemize}

In addition, if $m$ satisfies Property B, then all minimal zero-sum sequences of maximal
length over G are of this form.
\end{theorem}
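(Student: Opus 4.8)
The statement splits into two independent claims: an unconditional \emph{forward} direction (the listed families consist of minimal zero-sum sequences of length $\mathsf{D}(G) = m + mn - 1$) and a \emph{converse} (under Property B, these are the only ones). Throughout I use that $\mathsf{D}(C_m \oplus C_{mn}) = m + mn - 1$, which follows from the classical rank-two formula of Olson, and which is exactly the common length of all the displayed sequences (a one-line count in each case).

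For the forward direction the plan is, in each case, to exhibit a basis in which the coordinates of the terms of $S$ are transparent, to read off $\sigma(S) = 0$, and then to verify minimality by a congruence argument. In case $(i)$ the pair $(e_j, e_k)$ is already a basis, and writing $\sigma(S)$ in it, the defining condition $\sum x_i \equiv -1 \pmod{\text{ord}(e_j)}$ gives $\sigma(S) = 0$; any subsequence $T \mid S$ has $e_k$-coordinate congruent to its number of $-x_i e_j + e_k$ terms modulo $\text{ord}(e_k)$, so vanishing forces that number to be $0$ or $\text{ord}(e_k)$, and the $e_j$-coordinate then forces $T = \emptyset$ or $T = S$. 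In case $(ii)$ the pair $\{g_1, g_2\}$ need not be a basis, so first I would produce one: when $s = 1$, the element $g_2$ has maximal order $mn$ and hence generates a direct summand, whence $g_1 = v + c g_2$ for a suitable generator $v$ of a complementary $C_m$ and $(v, g_2)$ is a basis; when $s \geq 2$ the hypothesis $m g_1 = m g_2$ makes $g_1 - g_2$ an element of order $m$ meeting $\langle g_2\rangle$ trivially, so $(g_1 - g_2,\, g_2)$ is a basis. Rewriting the terms of $S$ in this basis and running the same modulo-$m$-then-modulo-$mn$ elimination as in case $(i)$, using $\sum x_i = m - 1$, again leaves only the empty subsequence and $S$ itself as zero-sum subsequences, proving minimality.

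For the converse I would argue by induction on $n$. The base case $n = 1$ is $G \simeq C_m^2$: here Property B supplies an element $g$ of multiplicity $m - 1$, necessarily of maximal order $m$ (otherwise $g^{\text{ord}(g)}$ would be a proper zero-sum subsequence), so $\langle g\rangle$ is a direct summand; passing to $G/\langle g\rangle \simeq C_m$, minimality of $S$ forces its image to be a minimal zero-sum sequence of length $m$ over $C_m$, which must be $h^m$ for a generator $h$, and lifting this back and adjusting the basis puts $S$ into form $(i)$. For the inductive step I would fix a basis $(e_1, e_2)$ with $\text{ord}(e_2) = mn$ and reduce modulo the subgroup $K = \langle m e_2\rangle \simeq C_n$, so that $G/K \simeq C_m^2$. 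Decomposing the (zero-sum) image $\overline{S}$ into minimal zero-sum blocks $\overline{B_1}, \dots, \overline{B_\ell}$ over $C_m^2$ and lifting to $B_1, \dots, B_\ell \mid S$, each block-sum $\sigma(B_i)$ lies in $K \simeq C_n$; the minimality of $S$ forces $(\sigma(B_1), \dots, \sigma(B_\ell))$ to be a \emph{minimal} zero-sum sequence over $C_n$, so $\ell \leq \mathsf{D}(C_n) = n$. Applying Property B to those blocks $\overline{B_i}$ of maximal length $2m - 1$ yields a distinguished element in each, and the plan is to propagate this local structure, together with the length constraint $\sum_i \lvert \overline{B_i}\rvert = m + mn - 1$, so as to peel off one cyclic $C_m$-worth of structure and descend to $C_m \oplus C_{m(n-1)}$, where the induction hypothesis applies.

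The main obstacle is precisely this last reassembly in the inductive step. Controlling the number $\ell$ of blocks and their individual lengths, ruling out or absorbing the non-maximal blocks, and above all reconciling the many local multiplicity-$(m-1)$ elements produced by Property B into a \emph{single} global generating pair $\{g_1, g_2\}$ with the stated coupling ($s = 1$ or $m g_1 = m g_2$), is the delicate combinatorial heart of the argument; this is where the hypothesis that $m$ satisfies Property B is genuinely used, and where the bulk of the work in \cite{Wolfgang} is concentrated.
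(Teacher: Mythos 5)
First, a point of comparison you could not have known: the paper contains no proof of this theorem at all. It is quoted as a black-box result of W.~Schmid \cite{Wolfgang} (the unconditional direction going back to known constructions), and is used only as input to Theorem \ref{Longues sous-suites de somme zero}; so there is no internal proof to measure your attempt against. On its own terms, your \emph{forward} direction is essentially complete and correct. The length count, the vanishing of $\sigma(S)$ in both cases, and the change of basis in case $(ii)$ all go through: for $s \geq 2$, the element $g_1 - g_2$ does have order $m$ (its image generates $G/\langle g_2\rangle \simeq C_m$, and $m(g_1-g_2)=0$) and meets $\langle g_2 \rangle$ trivially, so $(g_1-g_2,\,g_2)$ is a basis; and the coordinate-wise elimination works because $0 \leq \sum_{i \in I} x_i \leq m-1$ forces the first coordinate of a zero-sum subsequence to determine everything. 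One detail your sketch glosses over: for $s \geq 2$ the first coordinate only gives $a - \sum_{i\in I} x_i = km$ with $0 \leq k \leq n-1$, and you need the bound $a \leq sm-1$ to pin down $k = s-1$ and conclude $T = S$; this is routine but is an extra step beyond the literal ``same elimination as in case $(i)$.''

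The genuine gap is the converse, and you concede it yourself. Your framework is reasonable — reduce modulo $K = \langle m e_2 \rangle$, split $\overline{S}$ into minimal zero-sum blocks $\overline{B_1},\dots,\overline{B_\ell}$ over $C^2_m$, observe that minimality of $S$ makes $\left(\sigma(B_1),\dots,\sigma(B_\ell)\right)$ a minimal zero-sum sequence over $C_n$, hence $\ell \leq n$ — and each of these observations is correct, as is your base case $n=1$ (modulo the classical inverse theorem for $\mathsf{D}(C_m)$). But the deferred step is the entire content of the theorem. Property B only speaks about blocks of maximal length $2m-1$, and the length constraint $m+mn-1 = \sum_i \lvert \overline{B_i}\rvert \leq \ell(2m-1)$ with $\ell \leq n$ forces only roughly half the blocks to be maximal, so you have no structural handle on the short blocks; the many local elements of multiplicity $m-1$ need not a priori be equal or even related, and nothing in your sketch produces the global coupling ``$s=1$ or $mg_1 = mg_2$.'' Moreover the proposed descent to $C_m \oplus C_{m(n-1)}$ is not set up at all: that group is neither a subgroup nor a quotient of $G$ carrying $S$ in any way you specify, so the induction has no actual mechanism. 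As written, the converse is a plan whose missing middle coincides exactly with the bulk of Schmid's paper, so the attempt proves only the (easier, unconditional) half of the statement.
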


Moreover, Gao and Zhuang proved a useful structural result on normal sequences (see \cite[Theorem $1.2$]{Gao06}). In this section, we will use the following corollary of this theorem.
\begin{theorem}\label{Gao}
Let $G$ be a finite Abelian group, and let $S$ be a normal sequence over $G$. Then $S$ is of the form $S=0^{k}TU$, where $T$ is a zero-sumfree sequence with $\left|T\right|=\mathsf{D}(G)-1$, and $U$ is a sequence over $G$ such that $\text{\em Supp\em}(U) \subseteq \text{\em Supp\em}(T)$.
\end{theorem}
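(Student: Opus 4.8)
The plan is to read the statement off a maximal decomposition of $S$ into pairwise disjoint zero-sum pieces: normality pins down the lengths of all the pieces, which already yields the shape $S=0^kTU$ with $|T|=\mathsf{D}(G)-1$, and the only genuinely delicate point is to arrange that the piece $T$ can be chosen so as to contain every non-zero value occurring in $S$, i.e. so that $\text{Supp}(U)\subseteq\text{Supp}(T)$. This last step is exactly the exchange argument underlying the Gao--Zhuang structure theorem (Theorem~1.2 in \cite{Gao06}).

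First I would set up the elementary counting. Write $|S|=\mathsf{D}(G)+i-1$, so that normality says every zero-sum subsequence of $S$ has length at most $i$. Let $h$ be the maximal number of pairwise disjoint non-empty zero-sum subsequences of $S$, realised by subsequences $R_1,\dots,R_h$, and let $W$ be the complementary subsequence, $S=R_1\cdots R_h\cdot W$. Maximality of $h$ forces $W$ to be zero-sumfree (a zero-sum subsequence of $W$ would furnish an $(h+1)$-st disjoint piece), whence $|W|\leq\mathsf{D}(G)-1$; on the other hand $R_1\cdots R_h$ is itself a zero-sum subsequence, so normality gives $\sum_j|R_j|\leq i$. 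Combining,
$$\mathsf{D}(G)+i-1=|S|=\sum_j|R_j|+|W|\leq i+(\mathsf{D}(G)-1)=\mathsf{D}(G)+i-1,$$
so both inequalities are equalities: $|W|=\mathsf{D}(G)-1$ and $\sum_j|R_j|=i$. Since a zero-sumfree sequence contains no $0$, all $k=\mathsf{v}_0(S)$ zeros of $S$ lie among the $R_j$; setting $T:=W$ and letting $U$ be the product of the non-zero elements of $R_1\cdots R_h$, one obtains $S=0^kTU$ with $T$ zero-sumfree of length exactly $\mathsf{D}(G)-1$. Note in particular that \emph{every} maximal family of disjoint zero-sum subsequences has complement of length exactly $\mathsf{D}(G)-1$.

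It remains to secure $\text{Supp}(U)\subseteq\text{Supp}(T)$. I would argue by an extremal choice among maximal families: suppose the chosen decomposition has a non-zero $g\notin\text{Supp}(T)$ occurring in $U$, say in the piece $R_{j_0}$. Since $|Tg|=\mathsf{D}(G)$, the subsequence $Tg$ contains a non-empty zero-sum subsequence, necessarily of the form $gW'$ with $\emptyset\neq W'\mid T$ and $\sigma(W')=-g$. Replacing $R_{j_0}$ by $gW'$ in the family leaves $h$ pairwise disjoint zero-sum subsequences, so by maximality its complement $(R_{j_0}g^{-1})\,(T\,W'^{-1})$ is again a zero-sumfree complement of a maximal family, hence a valid decomposition with $|T|=\mathsf{D}(G)-1$. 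The point of this swap is that it moves $g$ toward the complement, and I would track a potential — say the number of non-zero elements of the zero-sum part whose value is absent from the support of the complement — with the aim of showing it strictly decreases, so that an extremal decomposition has potential $0$, which is exactly the desired support condition.

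The main obstacle is precisely to guarantee this strict decrease. In degenerate configurations — when $g$ has order two, when $g$ occurs with multiplicity one in $R_{j_0}$, or when $-g$ admits several representations as a subsum of $T$ — a single naive swap can merely reshuffle the decomposition and reproduce the same support, so the element deleted from $W'$ and, if necessary, a small compensating set of elements must be chosen with care to make the potential drop. Controlling this exchange is the heart of the matter; I would either reproduce the Gao--Zhuang argument at this point or invoke their Theorem~1.2 in \cite{Gao06} directly, everything else in the statement (the form $0^kTU$ and the equality $|T|=\mathsf{D}(G)-1$) being furnished by the elementary counting above.
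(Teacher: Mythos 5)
First, a point of comparison: the paper does not actually prove Theorem \ref{Gao}. It is stated as a corollary of Theorem $1.2$ of Gao and Zhuang \cite{Gao06}, and no derivation is given; the benchmark here is a citation, so your closing fallback --- invoke Theorem $1.2$ of \cite{Gao06} directly --- is precisely the paper's route. Moreover, your preliminary counting is correct and complete: with $\left|S\right|=\mathsf{D}(G)+i-1$, a maximal family $R_1,\dots,R_h$ of pairwise disjoint non-empty zero-sum subsequences has zero-sumfree complement $W$ (else $h$ would not be maximal), while $R_1\cdots R_h$ is itself zero-sum, so normality gives $\sum_j\left|R_j\right| \leq i$; the two bounds $\left|W\right| \leq \mathsf{D}(G)-1$ and $\sum_j\left|R_j\right| \leq i$ add up to exactly $\left|S\right|$, hence both are equalities, and since the zero-sumfree $W$ contains no $0$, all zeros of $S$ lie in the $R_j$ and you obtain $S=0^kTU$ with $T=W$ of length exactly $\mathsf{D}(G)-1$. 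That is a clean, self-contained proof of everything except the support condition --- which is more than the paper itself writes down.

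The exchange argument, however, has a genuine gap, and it sits exactly where you flagged it. Indeed, the guiding sentence ``the swap moves $g$ toward the complement'' is wrong for the swap as you define it: replacing $R_{j_0}$ by $gW'$ keeps $g$ in the zero-sum part (inside the new piece $gW'$); $g$ enters the support of the new complement only through leftover copies in $R_{j_0}g^{-1}$, i.e.\ only when $\mathsf{v}_g(R_{j_0}) \geq 2$. At the same time the swap removes $W'$ from the complement, which can delete values from $\text{Supp}(T)$ altogether --- if some value occurring in the zero-sum part has all of its $T$-copies inside $W'$, your potential strictly \emph{increases} after the move. So there is no evident monotone quantity attached to this exchange, and the strict-decrease claim, which carries the entire content of the support condition, is unproven. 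Read as a self-contained argument, the proposal is therefore incomplete at the one essential step; read as ``elementary reduction plus citation of Theorem $1.2$ of \cite{Gao06}'', as you explicitly offer, it is correct and coincides with the paper's purely citational treatment, with the counting reduction as a correct bonus.
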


Using the above two results, we can now prove the following theorem.

\begin{theorem}\label{Longues sous-suites de somme zero} Let $G \simeq C_{m} \oplus C_{mn},$ where $m,n \in \mathbb{N}^*$ and $m \geq 2$, be a finite Abelian group of rank two. Let $T$ be a zero-sumfree sequence over $G$ with $\left|T\right|=\mathsf{D}(G)-1$, and let $U$ be a non-empty sequence over $G$ such that $\text{\em Supp\em}(U) \subseteq \text{\em Supp\em}(T)$. If $m$ satisfies Property B, then every non-empty zero-sum subsequence $S'$ of $S=TU$ has length $\left|S'\right| \geq m$.
\end{theorem}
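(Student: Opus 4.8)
The plan is to reduce the statement to the structural description of maximal minimal zero-sum sequences furnished by Theorem \ref{Wolfgang}. Since $T$ is zero-sumfree with $\left|T\right|=\mathsf{D}(G)-1$, the sequence $W=\left(-\sigma(T)\right)\cdot T$ is a minimal zero-sum sequence of maximal length $\mathsf{D}(G)=m+mn-1$: it has sum zero by construction, and a proper zero-sum subsequence $V\mid W$ either avoids the adjoined term $-\sigma(T)$, hence is a nonempty zero-sum subsequence of the zero-sumfree $T$, or contains it, in which case the remaining part $V'\mid T$ satisfies $\sigma(V')=\sigma(T)$ and the complementary subsequence $T V'^{-1}$ is a nonempty zero-sum subsequence of $T$; both are impossible. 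As $m$ satisfies Property B, Theorem \ref{Wolfgang} then forces $W$ to be of type $(i)$ or $(ii)$. The benefit of passing to $W$ is that it pins down $\text{Supp}(T)$, and since $\text{Supp}(S)=\text{Supp}(TU)=\text{Supp}(T)\subseteq\text{Supp}(W)$, every element occurring in a zero-sum subsequence $S'$ of $S$ is either a fixed ``pure'' generator or a ``mixed'' element of the prescribed form.

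Next I would run a counting argument on an arbitrary nonempty zero-sum subsequence $S'\mid S$. Writing $a$ for the multiplicity of the pure element in $S'$ and $b$ for the total number of mixed elements, so that $\left|S'\right|=a+b$, the aim is to show $m\mid b$, or $m\mid a$ when $b=0$. To obtain this I project modulo the cyclic subgroup $\left\langle w\right\rangle$ of order $mn$ generated by the $\exp(G)$-element, via $\pi\colon G\to G/\left\langle w\right\rangle\simeq C_m$. In the configuration where this element is itself the pure element, every mixed term maps to a generator $u$ of $C_m$, so $\pi\bigl(\sigma(S')\bigr)=b\,u=0$ gives $m\mid b$ at once. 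In the configurations where the $\exp(G)$-element occurs inside the mixed terms, the sum takes the shape $\sigma(S')=c\,h+b\,w$ with $h$ the pure element and $c$ an integer depending on the mixed coefficients; applying $\pi$ to $\sigma(S')=0$ first yields $m\mid c$, and substituting this back into $\sigma(S')=0$ through the relation $mh=t\,w$ (together with $mw=0$) upgrades it to $m\mid b$. In all cases $b\geq1$ gives $\left|S'\right|\geq b\geq m$, while $b=0$ forces $S'$ to be a power of the pure element with $m\mid a$ (indeed $mn\mid a$ when that element has order $mn$), whence again $\left|S'\right|=a\geq m$.

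The step I expect to be the main obstacle is type $(ii)$, where $\{g_1,g_2\}$ is only a generating set rather than a basis, so $g_1$ need not have order $m$ and the relation $mg_1=t\,g_2$ involves an a priori uncontrolled $t$. The crucial observation here is that $\exp(G)=mn$ forces $mn\cdot g_1=0$, hence $n\,t\,g_2=0$ and therefore $m\mid t$; this is exactly what makes the congruence obtained above read $b\equiv0\ (\text{mod }m)$ instead of an arbitrary residue, and it is what rules out short zero-sum subsequences assembled from many repeated mixed elements supplied by $U$. Once $m\mid t$ is secured the argument closes uniformly, and the side conditions ``$s=1$ or $mg_1=mg_2$'' of Theorem \ref{Wolfgang} turn out to be unnecessary for this conclusion. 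A final point to check is that repetitions in $U$ are harmless: the argument uses only divisibility of the counts $a$ and $b$ and never an upper bound on the mixed coefficients, so arbitrary multiplicities coming from $U$ do not interfere.
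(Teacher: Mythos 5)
Your proposal is correct and takes essentially the same route as the paper's own proof: both extend $T$ by $-\sigma(T)$ to a minimal zero-sum sequence of maximal length, invoke Theorem \ref{Wolfgang} via Property B, split an arbitrary non-empty zero-sum subsequence into its pure and mixed parts, and deduce $m \mid b$ for the mixed count from $\mathrm{ord}(g_2)=mn$ together with $mg_1=tg_2$ where $m\mid t$ --- your ``crucial observation'' $m\mid t$ is precisely the paper's assertion $mg_1 \in \langle mg_2\rangle$ (the paper normalizes all structural types into the single shape $g_1^{\ell_1}\prod_i(-x_ig_1+g_2)$ instead of your case split, and it likewise never uses the side conditions of type $(ii)$), and the residual case $b=0$ is handled identically. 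One cosmetic slip: the parenthetical relation ``$mw=0$'' in your second paragraph is false since $w$ has order $mn$; what your argument actually needs, and what your final paragraph correctly supplies, is $mn \mid (c/m)t+b$ read modulo $m$ together with $m \mid t$.
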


\begin{proof} [Proof of Theorem \ref{Longues sous-suites de somme zero}] Let $G$ and $S=TU$ be as in the statement of the theorem. By Theorem \ref{Wolfgang}, and since $T'=T(-\sigma(T))$ is a minimal zero-sum sequence over $G$, $S=TU$ can be written in the following fashion
\begin{eqnarray*}
S & = & g^{\ell_1}_1 \displaystyle\prod^{\ell_2}_{i=1} \left(-x_ig_1 + g_2\right),
\end{eqnarray*}
where $\{g_1, g_2\}$ is a generating set of $G$ with $\text{ord}(g_2)=mn$, and $\ell_1,\ell_2 \in \mathbb{N}$ are such that $\ell_1+\ell_2=\left|S\right|$. In particular, one has $mg_1 \in \left\langle mg_2\right\rangle$, and $ag_1 \in \left\langle g_2\right\rangle$ if and only if $m \mid a$. Since $\left|S\right| \geq \mathsf{D}(G)$, it has to contain a non-empty zero-sum subsequence $S'$. Now, let us write $S'=VW$ where $V \mid g^{\ell_1}_1$ and $W=\prod_{i \in I} \left(-x_ig_1 + g_2\right)$, for some $I \subseteq \llbracket 1,\ell_2 \rrbracket$. We obtain
$$\sigma(S')=\left|V\right|g_1 - \left(\displaystyle\sum_{i \in I}x_i\right)g_1 + \left|W\right|g_2=0.$$
Since $\sigma(S')=0 \in \left\langle g_2 \right\rangle$, we have 
$$m \mid \left|V\right|-\left(\displaystyle\sum_{i \in I}x_i\right)$$ 
which implies that, for some $a \in \mathbb{N}$, one has 
$$\left|V\right|-\left(\displaystyle\sum_{i \in I}x_i\right)=amg_2.$$
Thus, $\sigma(S')=\left(am+\left|W\right|\right)g_2=0$, which gives $m \mid \text{ord}(g_2) \mid am + \left|W\right|$. Consequently, $m \mid \left|W\right|$. If $\left|W\right| \geq m$, then we are done. Otherwise, one has $\left|W\right|=\left|I\right|=0$, and $\sigma(S') = \left|V\right|g_1= 0 \in \left\langle g_2\right\rangle$. Thus, $m \mid \left|V\right|=\left|S'\right|$, and since $S'$ is a non-empty zero-sum subsequence of $S$, we obtain $\left|S'\right| \geq m$, which is the desired result.
\end{proof}

Theorem \ref{Groupes de rang deux} is now an easy corollary of Theorem \ref{Longues sous-suites de somme zero}.
\begin{proof}[Proof of Theorem \ref{Groupes de rang deux}] Let $G \simeq C_{m} \oplus C_{mn},$ where $m,n \in \mathbb{N}^*$ and $m \geq 2$, be a finite Abelian group of rank two. Let also $S$ be a normal sequence over $G$ with $\left|S\right|=\mathsf{D}(G)+i-1$, where $i \in \llbracket 1,m-1 \rrbracket$. By Theorem \ref{Gao}, $S$ is of the form $S=0^{k}TU$, where $T$ is a zero-sumfree sequence with $\left|T\right|=\mathsf{D}(G)-1$, and $U$ is a sequence over $G$ such that $\text{Supp}(U) \subseteq \text{Supp}(T)$. Since $m >i$, $S$ does not contain any zero-sum subsequence $S'$ with $\left|S'\right| \geq m$. So, it follows from Theorem \ref{Longues sous-suites de somme zero} that $U$ is empty, which implies $k=i$ and completes the proof.
\end{proof}

\vspace{-0.38cm}
\section{Two concluding remarks}
\label{Conclusion}
In this section, we would like to present two conjectures suggested by the results proved in this paper. The first one may be interpreted as a more general version of Theorem \ref{La Conjecture de Gao est vraie pour les p-groupes abeliens}, and would imply Conjecture \ref{conjecture Gao} in the same way as Theorem \ref{La Conjecture de Gao est vraie pour les p-groupes abeliens} implies Theorem \ref{Structure des sequences normales d'un p-groupe abelien}.

\begin{conjecture}
\label{conjecture generale Girard} 
Let $G \simeq C_{n_1} \oplus \dots \oplus C_{n_r},$ with $1 < n_1 \text{ } |\text{ } \dots \text{ }|\text{ } n_r \in \mathbb{N}$, be a finite Abelian group, and let $S$ be a sequence over $G$ with $\left|S\right|=\mathsf{D}(G)+i-1$, where $i \in \llbracket 1,n_1 \rrbracket$. Let also $\mathcal{A}$ be any $(i-1)$-subset of $\llbracket 1,n_1-1 \rrbracket$. Then $S$ contains a non-empty zero-sum subsequence $S'$ such that
$$\left|S'\right| \not\equiv b \text{ } (\text{\em mod \em} n_1), \text{ for all } b  \in \mathcal{A}.$$
\end{conjecture}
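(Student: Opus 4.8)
The plan is to follow the strategy that yields Theorem~\ref{La Conjecture de Gao est vraie pour les p-groupes abeliens} in the $p$-group case, namely to reduce the statement to a subset-sum problem in which one extra coordinate records the length of the selected subsequence modulo $n_1$. Concretely, I would introduce the auxiliary group $\widehat{G} = G \oplus C_{n_1}$ together with the lifted sequence $\widehat{S} = \prod_{k=1}^{m}(g_k,\overline{1})$, where $\overline{1}$ is a fixed generator of $C_{n_1}$. A subset $\emptyset \subsetneq I \subseteq \{1,\dots,m\}$ then yields a non-empty zero-sum subsequence $S' = \prod_{i\in I} g_i$ of $S$ with $\left|S'\right| \equiv s \text{ } (\text{mod } n_1)$ precisely when the corresponding subsequence of $\widehat{S}$ sums to $(0,\overline{s})$. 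Writing $\overline{\mathcal{A}} = \llbracket 0,n_1-1\rrbracket \backslash \mathcal{A}$, the conjecture becomes the assertion that, as soon as $m \geq \mathsf{D}(G)+\left|\mathcal{A}\right|$, some non-empty subsequence of $\widehat{S}$ sums into $\{0\}\times\overline{\mathcal{A}}$. In the $p$-group case this is exactly what Theorem~\ref{Theoreme Alon} delivers, through the identity $p^{d_n}-\text{card}_p(\mathcal{S}_n)=\left|\mathcal{A}\right|$; so the heart of the matter is to establish an analogue of Theorem~\ref{Theoreme Alon} valid for arbitrary finite Abelian groups and for the composite modulus $n_1$.

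To prove such a generalized statement, I would try three complementary routes. First, a prime-by-prime approach: decompose $G=\bigoplus_p G_p$ into its Sylow components, observe that for each prime $p$ dividing $n_1$ its $p$-part is exactly the smallest invariant factor of $G_p$, and attempt to control the length of a common zero-sum subsequence modulo each prime power $p^{e_p}$, reassembling the local residues into the global condition modulo $n_1$ via the Chinese Remainder Theorem. Second, a character-sum approach over $\widehat{G}$: write the number of subsets $I$ with $\sigma(S')=0$ and $\left|S'\right| \bmod n_1 \in \overline{\mathcal{A}}$ as a character sum over $\widehat{G}$, and seek a lower bound forcing this count to exceed the single contribution of $I=\emptyset$; this is delicate, since the threshold $\mathsf{D}(G)+\left|\mathcal{A}\right|$ is far below the range in which a naive main term dominates. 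Third, in the special case where $G$ has rank two, I would instead mimic the argument of Section~\ref{Section groupes de rang deux}, feeding the structural description of long minimal zero-sum sequences from Theorem~\ref{Wolfgang} into a direct length computation, so as to settle Conjecture~\ref{conjecture generale Girard} at least for $G\simeq C_m\oplus C_{mn}$ with $m$ satisfying Property~B, as a stepping stone towards the general case.

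I expect the main obstacle to be precisely the passage from a prime modulus to the composite modulus $n_1$. Every proof of the extremal subset-sum statements used here---the Chevalley--Warning input behind Theorem~\ref{Theoreme Alon}, and the Combinatorial Nullstellensatz of Theorem~\ref{Combinatorial Nullstellensatz}---relies on working over the field $\mathbb{F}_p$, and there is no field available modulo a composite $n_1$, nor for group coordinates of composite order. The naive Chinese Remainder reduction stumbles on the fact that an arbitrary forbidden set $\mathcal{A}\subseteq\llbracket 1,n_1-1\rrbracket$ does not factor as a product of local forbidden sets, so the residues produced by the separate Sylow components cannot, in general, be forced to recombine into a prescribed residue modulo $n_1$; moreover the Sylow components $G_p$ with $p \nmid n_1$ still impose a zero-sum condition carrying no compensating length constraint. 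A further difficulty is that the exact value of $\mathsf{D}(G)$ is known only for $p$-groups and for groups of rank two, so even the extremal threshold $\mathsf{D}(G)+i-1$ around which the whole argument is calibrated is not under control in the general case. Overcoming these points would seem to require a genuinely new length-sensitive zero-sum theorem over arbitrary finite Abelian groups, rather than a formal adaptation of the tools developed above.
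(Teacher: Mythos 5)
The statement you were asked to prove is Conjecture~\ref{conjecture generale Girard}, which the paper itself does not prove: it is proposed in Section~\ref{Conclusion} as an open generalization of Theorem~\ref{La Conjecture de Gao est vraie pour les p-groupes abeliens}, so there is no proof in the paper to compare against. Your proposal is, accordingly, not a proof either, and you say so yourself; the review question is whether your reduction and your diagnosis of the obstacles are sound, and they are. Your lifting to $\widehat{G}=G\oplus C_{n_1}$ with the extra coordinate recording length is precisely the mechanism by which the paper derives Theorem~\ref{La Conjecture de Gao est vraie pour les p-groupes abeliens} from Theorem~\ref{Theoreme Alon} (the paper takes $n=r+1$, $d_n=1$, $\mathcal{S}_n=\bar{\mathcal{A}}$, and uses exactly the identity $p^{d_n}-\text{card}_p(\mathcal{S}_n)=\left|\mathcal{A}\right|$ that you cite), and your observation that the whole conjecture reduces to a composite-modulus analogue of the Alon--Friedland--Kalai theorem is the correct way to frame what is missing.

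The genuine gap is that this needed analogue is never established, and none of your three routes closes it. The Chinese Remainder route fails for the reason you identify: an arbitrary $(i-1)$-subset $\mathcal{A}\subseteq\llbracket 1,n_1-1\rrbracket$ does not decompose as a product of local forbidden sets, so residues extracted independently at each prime cannot be forced to recombine into $\bar{\mathcal{A}}$ modulo $n_1$; moreover the Sylow components at primes not dividing $n_1$ consume length without contributing to the congruence constraint, and since $\mathsf{D}(G)$ is unknown outside $p$-groups and rank two, the budget $\mathsf{D}(G)+i-1$ cannot even be compared to a sum of local thresholds. The character-sum route cannot work at this length: counting arguments of that type require $\left|S\right|$ well beyond $\mathsf{D}(G)$, whereas the conjecture is calibrated exactly at the Davenport threshold, where existence is a pigeonhole/algebraic phenomenon, not a counting one (both proofs in the paper, via Theorem~\ref{Theoreme Alon} and via Theorem~\ref{Combinatorial Nullstellensatz}, are intrinsically $\mathbb{F}_p$-algebraic, as you note). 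The rank-two route via Theorem~\ref{Wolfgang} is plausible as partial progress, but observe that Section~\ref{Section groupes de rang deux} only extracts a lower bound $\left|S'\right|\geq m$ on lengths of zero-sum subsequences of normal sequences, which suffices for Theorem~\ref{Groupes de rang deux} but is strictly weaker than the refined congruence conclusion of Conjecture~\ref{conjecture generale Girard}; even in that case a new argument would be needed. In short: your reduction is faithful to the paper's method and your obstruction analysis is accurate, but the central lemma remains conjectural, exactly as the paper leaves it.
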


\medskip
Let $G$ be a finite Abelian group of exponent $m$. By $\mathsf{s}_{m\mathbb{N}}(G)$ we denote the smallest $t \in \mathbb{N}^*$ such that every sequence $S$ over $G$ with $|S| \geq t$ contains a non-empty zero-sum subsequence $S'$ with $\left|S'\right| \equiv 0 \text{ } (\text{mod } m).$ The exact value of the invariant $\mathsf{s}_{m\mathbb{N}}(G)$ is currently known for finite Abelian groups of rank $r \leq 2$, and finite Abelian $p$-groups only (see \cite[Theorem $6.7$]{GaoGero06}). For instance, Conjecture \ref{conjecture generale Girard} implies that for all integers $n,r \geq 1$, one has $\mathsf{s}_{n\mathbb{N}}(C^r_n)=(r+1)(n-1)+1$. This conjecture would also help to tackle the inverse problem associated to $\mathsf{s}_{m\mathbb{N}}(G)$, by giving an account of the variety of zero-sum subsequences contained in a long sequence without any non-empty zero-sum subsequence of length congruent to $0$ modulo $m$. Finally, any progress on this conjecture would provide a new insight into the structure of sequences over $C_n$ without any zero-sum subsequence of length $n$ (see \cite[Theorem $7.5$ and Conjecture $7.6$]{GaoGero06}, as well as \cite{SavChen07}).

\medskip
Let $G$ be a finite Abelian group of exponent $m$, and let $\ell \geq 1$ be an integer. By $\eta_{\ell m}(G)$ we denote the smallest $t \in \mathbb{N}^*$ such that every sequence $S$ over $G$ with $|S| \geq t$ contains a non-empty zero-sum subsequence $S'$ with $\left|S'\right| \leq \ell m$. It may be observed that, for every $\ell \geq \left\lceil \mathsf{D}(G)/m\right\rceil$, one has the equality $\eta_{\ell m}(G)=\mathsf{D}(G)$. Now, our second conjecture is the following.

\begin{conjecture}
\label{conjecture eta}
Let $G$ be a finite Abelian group of exponent $m$, and let $S$ be a sequence over $G$ with $\left|S\right|=\eta_{\ell m}(G)+i-1$, where $i \in \llbracket 1,\ell m \rrbracket$. Then $S$ contains a zero-sum subsequence $S'$ of length $i \leq \left|S'\right| \leq \ell m$. 
\end{conjecture}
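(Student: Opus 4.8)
The plan is to argue by contradiction, combining an inductive splitting in the spirit of the proof of Theorem~\ref{Structure des sequences normales d'un p-groupe abelien} with the defining extremal property of $\eta_{\ell m}(G)$ in place of the residue-avoidance input of Theorem~\ref{La Conjecture de Gao est vraie pour les p-groupes abeliens}. Assume, for contradiction, that $S$ has no zero-sum subsequence $S'$ with $i \leq \left|S'\right| \leq \ell m$. Since any zero-sum subsequence of length at most $\ell m$ would then be forced to have length at most $i-1$, the hypothesis is equivalent to asserting that every non-empty zero-sum subsequence of $S$ has length either at most $i-1$ or at least $\ell m + 1$. The base case $i=1$ is immediate: here $\left|S\right| = \eta_{\ell m}(G)$, so the definition of $\eta_{\ell m}(G)$ produces a non-empty zero-sum subsequence of length at most $\ell m$, which already lies in $\llbracket 1, \ell m\rrbracket$.

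First I would dispose of the small values of $i$ by a greedy extraction. Repeatedly remove from $S$ a zero-sum subsequence of length at most $\ell m$ --- hence, under the contradiction hypothesis, of length at most $i-1$ --- as long as the part of $S$ not yet removed still has length at least $\eta_{\ell m}(G)$. This yields pairwise disjoint zero-sum subsequences $B_1, \dots, B_r$ of lengths $t_1, \dots, t_r \leq i-1$ whose total length $t_1 + \dots + t_r$ lies in $\llbracket i, 2i-2\rrbracket$: the lower bound holds because the process halts only once fewer than $\eta_{\ell m}(G)$ elements remain, and the upper bound because each removal discards at most $i-1$ elements. The partial products $B_1 \cdots B_k$ are zero-sum subsequences whose lengths grow from $0$ to a final value at least $i$ in steps of size at most $i-1$; hence the first partial product to reach length at least $i$ has length in $\llbracket i, 2i-2\rrbracket$. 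As soon as $2i-2 \leq \ell m$, this length lies in the forbidden window $\llbracket i, \ell m\rrbracket$, a contradiction. This settles every $i$ with $i \leq \lfloor \ell m/2\rfloor + 1$.

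The delicate range is $i$ close to $\ell m$, where the forbidden window $\llbracket i, \ell m\rrbracket$ is short but the available short zero-sum subsequences may have length as large as $i-1 > \ell m/2$, so that a single block leaps across the window and the greedy argument collapses. This is exactly the regime requiring a zero-sum subsequence of a prescribed \emph{large} length; for $G$ cyclic with $\ell = 1$ and $i = m$ the statement specialises to the Erd\H{o}s--Ginzburg--Ziv theorem, and the near-extremal case seems equivalent in general to an exact-length zero-sum statement that is itself open. The route I would pursue is a downward induction fed by exact-length information: using the Erd\H{o}s--Ginzburg--Ziv constant $\mathsf{s}(G)$ (the least $t$ forcing a zero-sum subsequence of length $m=\exp(G)$) and the relation $\mathsf{s}(G) = \eta_m(G) + m - 1$, together with the structural descriptions of the extremal sequences for such invariants (available for cyclic groups, for groups of rank two via Theorem~\ref{Wolfgang}, and for finite Abelian $p$-groups), one can force a zero-sum subsequence of length exactly $\ell m$ at the top of the range and then strip off small zero-sum blocks to descend into $\llbracket i, \ell m\rrbracket$. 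The main obstacle is precisely this last step: the definition of $\eta_{\ell m}(G)$ furnishes only the \emph{existence} of a zero-sum subsequence of length below $\ell m$, whereas the hard range demands control of its \emph{exact} length, which requires structural input beyond the threshold definition and which I would expect to yield an unconditional proof only for those groups where such exact-length and structural results are already known.
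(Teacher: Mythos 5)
You should first be clear about the status of the target: the statement is Conjecture \ref{conjecture eta}, which the paper states in its concluding section as an open problem and does not prove, so there is no proof in the paper to compare yours against. Judged on its own terms, the first half of your proposal is correct and is a genuine partial result. Under your contradiction hypothesis, every non-empty zero-sum subsequence of $S$ of length at most $\ell m$ has length at most $i-1$; your greedy extraction removes pairwise disjoint zero-sum blocks of length at most $i-1$ whose total length is at least $\left|S\right|-\left(\eta_{\ell m}(G)-1\right)=i$ and at most $2i-2$, and since the partial concatenations $B_1 \cdots B_k$ are again zero-sum subsequences of $S$ whose lengths grow in steps of at most $i-1$, the first one of length at least $i$ has length at most $2i-2$. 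Whenever $2i-2 \leq \ell m$, i.e. $i \leq \lfloor \ell m/2 \rfloor + 1$, this length falls in the forbidden window $\llbracket i, \ell m \rrbracket$, a contradiction. This argument is sound, unconditional, and settles the lower half of the range of $i$ --- something the paper itself does not record.

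The genuine gap is the range $i > \lfloor \ell m/2 \rfloor + 1$, and your sketch for it does not close it, as you yourself concede. The inputs you propose to feed the downward induction are themselves conjectural or unavailable: the relation $\mathsf{s}(G)=\eta_{m}(G)+m-1$ is a conjecture of Gao known only for special classes of groups, and for $\ell \geq 2$ even less is known about forcing zero-sum subsequences of prescribed exact length. Moreover, forcing a zero-sum subsequence of length exactly $\ell m$ requires sequences of length roughly $\eta_{\ell m}(G)+\ell m-1$, which your hypothesis provides only at the very top $i=\ell m$; for intermediate $i$ the sequence is shorter, so you would need exact lengths inside $\llbracket i,\ell m \rrbracket$, and the ``stripping off small blocks'' step meets the same leap-over-the-window obstruction you identified, now in reverse: the removable blocks can have length up to $i-1$, which exceeds $\ell m - i$ precisely in the hard range $2i > \ell m+1$. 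That this range is beyond current techniques is confirmed by the paper itself, which notes that the instance $\ell = \left\lceil \mathsf{D}(G)/m\right\rceil$, $i=\ell m$ would answer an open problem of Gao (see Section $3$ in \cite{Gao03} and Theorem $6.12$ in \cite{GaoGero06}), and that $\ell=1$ already generalizes Conjecture $6.5$ in \cite{GaoGero06}. The honest way to present your work is therefore as a proposition establishing the conjecture for $i \leq \lfloor \ell m/2 \rfloor + 1$, together with a reduction program --- not a proof --- for the remaining range.
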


\medskip
Let $G$ be a finite Abelian group of order $n$ and exponent $m$. For instance, if $\ell=1$ in Conjecture \ref{conjecture eta}, one obtains a generalization of Conjecture $6.5$ in \cite{GaoGero06}, due to Gao. If $\ell=n/m$, then, since it is known that $\mathsf{D}(G) \leq n=\ell m$, we obtain $\eta_{\ell m}(G)=\mathsf{D}(G)$. Therefore, Conjecture \ref{conjecture eta} implies that every sequence $S$ over $G$ such that $\left|S\right|=\mathsf{D}(G)+i-1$, where $i \in \llbracket 1,n \rrbracket$, has to contain a zero-sum subsequence $S'$ with $i \leq \left|S'\right| \leq n$, which can be seen as a generalization of Gao's theorem (see \cite[Theorem $1$]{Gao96}). Finally, if $\ell=\left\lceil \mathsf{D}(G)/m\right\rceil$ and $i = \ell m$, then Conjecture \ref{conjecture eta} implies that every sequence $S$ over $G$ with $\left|S\right|=\mathsf{D}(G)+ \ell m - 1$ has to contain a zero-sum subsequence $S'$ with $\left|S'\right| = \ell m$, which would provide an answer to a problem of Gao (see \cite[Section $3$]{Gao03} and \cite[Theorem $6.12$]{GaoGero06}).

\vspace{-0.3cm}
\section*{Note added in proof}
Shortly after this paper was accepted for publication, Reiher \cite{Reiher} proved that Property B holds for all primes. 
It follows from the results of \cite{GaoGeroGrynkie08} that all integers $n \geq 2$ satisfy Property B. 
Thus, our Theorem \ref{Groupes de rang deux} holds unconditionally, which gives a positive answer to Conjecture \ref{conjecture Gao} for all Abelian groups of rank two. 
Further progress has since been made on this conjecture \cite{GuanYuanZeng}.

\vspace{-0.3cm}
\section*{Acknowledgments}
This work was supported by a postdoctoral grant awarded by the \'{E}cole polytechnique, Paris. I am also grateful to Melvyn Nathanson and to the City University of New York's Graduate Center for all their hospitality and for providing an excellent atmosphere for research.


\end{document}